\newtheorem{theo}{Theorem}[section]
\newtheorem{cor}[theo]{Corollary}
\newtheorem{lem}[theo]{Lemma}
\theoremstyle{definition}
\newtheorem{defin}[theo]{Definition}
\newtheorem*{lem*}{Lemma}
\newtheorem{rem}[theo]{Remark}
\newtheorem*{cor*}{Corollary}
\newtheorem*{theo*}{Theorem}
\newcommand{\norm}[1]{\lVert#1\rVert}
\DeclareMathOperator*{\esssup}{ess\,sup}
\def\XXint#1#2#3{{\setbox0=\hbox{$#1{#2#3}{\int}$ }
\vcenter{\hbox{$#2#3$ }}\kern-.6\wd0}}
\def\XXiint#1#2#3{{\setbox0=\hbox{$#1{#2#3}{\iint}$ }
\vcenter{\hbox{$#2#3$ }}\kern-.55\wd0}}
\renewcommand{\d}{\:\:\!\!\mathrm{d}}
\newcommand{\N}{\ensuremath{\mathbb{N}}}
\newcommand{\R}{\ensuremath{\mathbb{R}}}
\renewcommand{\b}{\mathfrak{b}}
\renewcommand{\epsilon}{\varepsilon}
\renewcommand{\rho}{\varrho}
\numberwithin{equation}{section}
\begin{document}
\renewcommand{\refname}{References} 
\renewcommand{\abstractname}{Abstract}

\title[Local boundedness of weak solutions to the DSW equations]{Local boundedness of weak solutions to the Diffusive Wave Approximation of the Shallow Water equations}

\date{\today}
\subjclass[2010]{35B65, 35D30, 35K10}
\keywords{local boundedness, doubly nonlinear parabolic equations}
 
\makeatother

\author[T. Singer]{Thomas Singer}
\address{Thomas Singer\\
Department of Mathematics, Aalto University\\
P.~O.~Box 11100, FI-00076 Aalto University, Finland}
\email{thomas.singer@aalto.fi}

\author[M. Vestberg]{Matias Vestberg}
\address{Matias Vestberg\\
Department of Mathematics, Aalto University\\
P.~O.~Box 11100, FI-00076 Aalto University, Finland}
\email{matias.vestberg@aalto.fi}

\begin{abstract}
In this paper we prove that  weak solutions to the Diffusive Wave Approximation of the Shallow Water equations 
$$
\partial_t u - \nabla\cdot ((u-z)^\alpha|\nabla u|^{\gamma-1}\nabla u) = f
$$ 
are locally bounded. Here, $u$ describes the height of the water,  $z$ is a given function that represents the land elevation and $f$ is a source term accounting for evaporation, infiltration or rainfall.
\end{abstract}

\maketitle

\section{Introduction}

In this work we study regularity properties of weak solutions to the diffusive wave approximation of the shallow water equations (DSW). This parabolic PDE is given by
\begin{align}\label{DSWeq}
\partial_t u - \nabla\cdot ((u-z)^\alpha|\nabla u|^{\gamma-1}\nabla u) = f  \quad \text{ in } \Omega_T:=\Omega\times (0,T),
\end{align}
where $\alpha$ and $\gamma$ are known parameters, $z$ is a given function defined on an open bounded set $\Omega \subset \R^n$, and we are looking for solutions $u$ defined in a parabolic space-time cylinder $\Omega_T$ for $T>0$. In applications, $n=2$, but we have investigated the equation for all $n\geq 2$. Physically, $z$ represents the land elevation, and $u$ describes the height of the water, measured with respect to some ground level. The right-hand side $f$ is a source term accounting for evaporation, infiltration or rainfall. Since the water flows on top of the land, it is natural to consider only solutions satisfying $u\geq z$, which also guarantees that the quantity $(u-z)^\alpha$ appearing in the elliptic term is always  well-defined.

The DSW equation arises as an approximation of the two-dimensional shallow water equations when the inertial terms are neglected while retaining the gravitational terms, and the viscosity is modeled by introducing friction slopes in accordance with Manning's formula. See for example \cite{BoOc} and \cite{DiToLa}. A more empirical, concise derivation can be found in \cite{AlSaDa}. The DSW equation has been used to successfully model flow in wetlands and vegetated areas\cite{FeMo}, and dam breaks\cite{HrBeFr,XaKo}.

Regarding the purely mathematical investigation of the DSW equation, very little is known. In the special case $z=0$, i.e.\ ignoring topographic effects, equation \eqref{DSWeq} was studied by Alonso, Santillana and Dawson in \cite{AlSaDa}. They show the existence of suitably defined weak solutions to the Dirichlet problem with zero values on the lateral boundary. In the same article it is shown that these solutions are bounded if the initial data and the source term are bounded, whereas a numerical investigation of the DSW equation in the general case was done by the two latter authors in \cite{SaDa}. The existence of weak solutions of doubly nonlinear parabolic equations was first proven in \cite{IvMk,IvMkJa,IvMkJa2} for a bounded source term $f$, and for a more general right-hand side in \cite{St}. Let us note that these results also cover the flat case $z= 0$ for the DSW equation.

In the case $z\neq 0$ no mathematical theory is developed. In this paper we start closing this gap by showing that every weak solution to the DSW equation \eqref{DSWeq} is locally bounded. It turns out that we have to assume that $f$ and $z$ are smooth enough, which was already conjectured in \cite[Section 5]{AlSaDa}. For the exact assumptions on the functions $f$ and $z$ we refer to Section \ref{weaksolsect}.

For the parameters $\alpha$ and $\gamma$ we always assume that $\alpha>0$ and $\gamma\in (0,1)$. Furthermore, in this work we are only interested in the slow diffusion case
\begin{align}\label{alphagammacond}
\alpha +\gamma>1.
\end{align} 
The nomenclature slow diffusion traces back to the simple case where $z=0$. Here it is well known that perturbations of solutions only propagate with finite speed, see \cite{BoDuMaSc,St}. Let us note that in applications typically the case  $\alpha \in (1,2)$ is of interest, which is always included in our framework.

In the special case where $\alpha=0$ the DSW equation \eqref{DSWeq} reduces to the $p$-Laplacian with $p=\gamma+1$, whereas if $\gamma=1$ and $z=0$ we end up with the porous medium equation.  

Let us now have a brief look at the history of establishing boundedness for weak solutions. We will use the celebrated method of De Giorgi, which was introduced in the context of linear elliptic equations, cf.\ \cite{De}.  The method is based on the idea to first prove suitable energy estimates on level sets and performing an iteration afterwards by also using Sobolev's embedding.  It turned out that this method is also applicable to nonlinear elliptic equations, see \cite{LaUr}, and also to parabolic equations. In the latter setting, regularity results for linear equations are due to  Ladyzhenskaya, Solonnikov \& Ural'tseva, c.f.\ \cite{LaSoUr}, whereas the nonlinear case was treated by  DiBenedetto in \cite{DiBene}. 
 
Let us shortly explain the main difficulties appearing in our proof. We first observe that it is necessary to consider the function $v:=u-z$ instead of $u$ itself. Also from a physical point of view it is most natural to focus on $v$, since the values of $u$ and $z$ depend on some arbitrarily fixed ground level, whereas their difference is invariant. This however leads to the appearance of a new term involving the function $z$ on the right-hand side of our energy estimate, c.f.\ Section \ref{energysect}. Nevertheless, our method is still applicable if $z$ is smooth enough. 

It is also worth mentioning that we assume that a power of $v$ rather than $v$ itself should have a spatial gradient. This phenomena appears already for the porous medium equation, c.f. \cite{BoDuKoSc,Va} and also for doubly nonlinear equations, see \cite{AltLu,FoSoVe,St,Ve}. This leads to a difficulty when using Sobolev's embedding. The powers of the terms arising in the diffusion part are different from the one that are produced by the parabolic part of the equation. Therefore, it is not possible to directly apply the parabolic Sobolev inequality.

Let us finally mention a well known problem regarding parabolic equations. The main idea is to use the solution itself as testing function, even though it may not possess a weak time derivative. This issue can be solved by using a mollification in time, although a rigorous argument is quite delicate in our setting.

The paper is organized as follows. In Section \ref{weaksolsect} we will present a precise definition of weak solutions and our main result, whereas in Section \ref{Preliminaries} we will introduce some notation and auxiliary tools. Section \ref{energysect} is devoted to proving that weak solutions satisfy certain energy estimates, which are finally utilized in Section \ref{degiorgisect} to prove local boundedness.

\medskip
 
\noindent
{\bf Acknowledgments.} T.~Singer has been supported by the DFG-Project  SI 2464/1-1 ``Highly nonlinear evolutionary problems". M.~Vestberg was partially supported by the V\"ais\"al\"a Foundation and both authors want to express their gratitude to the Academy of Finland. Moreover, we would like to thank Kazuhiro Ishige and Juha Kinnunen for drawing our attention to this topic.

\section{Setting and main result}\label{weaksolsect}

In this section we will state our main result and start by motivating the notion of weak solutions. Already in the easiest case $z=0$ it is necessary to assume that $u^\beta$  belongs to some Sobolev space for some $\beta> 0$ to be chosen later, compare with \cite{AlSaDa,St}. However, in \eqref{DSWeq} such terms do not appear directly, which forces us to interpret terms in a different way.

We reformulate the problem in a way that the factor $(u-z)^\alpha$ which causes the degeneracy of the elliptic part is no longer present. We can rewrite the vector field as
\begin{align*}
(u-z)^\alpha |\nabla u|^{\gamma-1} \nabla u=|(u-z)^\frac{\alpha}{\gamma}\nabla u|^{\gamma-1}(u-z)^\frac{\alpha}{\gamma}\nabla u,
\end{align*}
and note that at least formally,
\begin{align*}
(u-z)^\frac{\alpha}{\gamma}\nabla u&=(u-z)^\frac{\alpha}{\gamma}(\nabla u-\nabla z)+(u-z)^\frac{\alpha}{\gamma}\nabla z\\
&=\beta^{-1}\nabla (u-z)^\beta+(u-z)^\frac{\alpha}{\gamma}\nabla z,
\end{align*}
for the choice 
\begin{align}\label{definition:beta}
\beta:=\frac{\alpha+\gamma}{\gamma}.
\end{align}
 Hence, on a formal level, the DSW equation is equivalent to 
\begin{align}\label{reformulated}
\nabla \cdot \big(\beta^{-\gamma}|\nabla v^\beta+\beta v^\frac{\alpha}{\gamma}\nabla z|^{\gamma-1}(\nabla v^\beta +\beta v^\frac{\alpha}{\gamma}\nabla z)\big)-\partial_t v= - f,
\end{align}
where we have denoted $v=u-z$ to simplify the expression. Note also that $\partial_t u=\partial_t v$, since $z$ is independent of time. This formulation suggest that the natural requirement for solutions is that $v^\beta \in L^p(0,T;W^{1,p}(\Omega))$ for some suitable exponent $p$. The existence results for the flat case in \cite{AlSaDa,St} suggest that the natural choice is $p=\gamma+1$. In order not to overburden our notation we will introduce the vector field
\begin{align*}
A(v,\nabla v^{\beta}):=\beta^{-\gamma}|\nabla v^\beta+\beta v^\frac{\alpha}{\gamma}\nabla z|^{\gamma-1}(\nabla v^\beta +\beta v^\frac{\alpha}{\gamma}\nabla z).
\end{align*}
Furthermore, we have to assume some regularity for the functions $z$ and $f$ to ensure that the weak formulation is well defined. Here we propose that
\begin{align}
\label{assumption:data}
 f\in L^{\frac{\gamma+1}{\gamma}}(\Omega_T) \quad \text{ and } \quad z\in W^{1,\beta(\gamma+1)}(\Omega).
\end{align}
  Thus, we are led to the following definition.

\begin{defin}\label{weakdef}
Suppose that $f$ and $z$ satisfy \eqref{assumption:data}. We say that $u\colon\Omega_T\to \R $ is a weak solution to the DSW equation \eqref{DSWeq} if and only if $v:=u-z$ is nonnegative, 
$
v^\beta \in L^{\gamma+1}(0,T;W^{1,\gamma+1}(\Omega))
$ and 
\begin{align}\label{weakform}
&\iint_{\Omega_T} A(v,\nabla v^\beta)\cdot \nabla \varphi- v\partial_t \varphi\d x\d t=\iint_{\Omega_T} f\varphi \d x\d t,
\end{align}
for all $\varphi \in C^\infty_0(\Omega_T)$. Moreover, if the above statement is true,  we say that $v\colon\Omega_T\rightarrow \R$ is a weak solution to \eqref{reformulated}. 
\end{defin}

In the proof of local boundedness, we require stronger assumptions for the given functions $f$ and $z$. This seems natural because such an assumption is already needed for the $p$-Laplacian, which can be seen as a special case of our equation, c.f.\ \cite{DiBene}. Here, we assume that there exists some $\sigma >\frac{\gamma+1+n}{\gamma+1}$ such that
\begin{align}
\label{assumption:data:regularity}
f\in L^{\frac{\gamma+1}{\gamma}\sigma}(\Omega_T) \quad\text{ and }\quad z\in W^{1,\beta(\gamma+1)\sigma}(\Omega)
\end{align}
holds true. Note that $\beta(\gamma+1)\sigma>n$, which implies that $z$ is H\"older continuous, and in particular, that $z$ is bounded. From the point of view of applications, our assumptions on $f$ and $z$ are not restrictive at all. In realistic models of shallow water flow, $z$ is typically a Lipschitz function, and $f$ is bounded.

Before stating our main result, we will introduce space time cylinders that fit in our setting. For $z_o=(x_o,t_o)\in \Omega_T$ we define 
$$
Q_{\rho}(z_o):=B_\rho(x_o)\times \big(t_o-\rho^{\frac{\beta+1}{\beta}},t_o+\rho^{\frac{\beta+1}{\beta}}\big).
$$
Finally, we present the main result of our paper:

\begin{theo}\label{mainthm}
Let $u$ be a weak solution to the DSW equation \eqref{DSWeq} in the sense of Definition \ref{weakdef}, and suppose that \eqref{assumption:data:regularity} holds true. Then $u$ is locally bounded, and moreover if  $Q_{\rho}(z_o)\Subset\Omega_T$ for some $0<\rho\leq 1$ we have the quantitative estimate
\begin{align*}
\sup_{Q_{\frac\rho 2}(z_o)} (u-z) \leq c\rho^{-\frac{n+\gamma+1}{\beta+1}}\left[1+\iint_{Q_{\rho}(z_o)} (u-z)^{\beta(\gamma+1)} \d x\d t\right]^{\frac{1}{\beta+1}}
\end{align*}
for a constant $c\geq 1$ that depends only on $n$, $\alpha$, $\gamma$, $\|f\|_{L^\sigma(\Omega_T)}$, $\|z\|_{W^{1,\beta(\gamma+1)\sigma }(\Omega)}$ and $\sigma$. 
\end{theo}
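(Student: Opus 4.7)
The plan is to run a De Giorgi iteration on a family of nested space--time cylinders, feeding the energy estimates of Section \ref{energysect} into a parabolic Gagliardo--Nirenberg--Sobolev inequality to obtain an $L^p$ gain at each step. Write $v:=u-z$ and fix $z_o\in\Omega_T$ with $Q_\rho(z_o)\Subset\Omega_T$ and $\rho\leq 1$. The set-up involves the shrinking cylinders $Q_j:=Q_{\rho_j}(z_o)$ with $\rho_j:=\frac{\rho}{2}+\frac{\rho}{2^{j+1}}$, the increasing levels $k_j:=k(1-2^{-j})$ for a threshold $k\geq 1$ to be fixed at the end, and cut-offs $\zeta_j\in C_0^\infty(Q_j)$ with $\zeta_j\equiv 1$ on $Q_{j+1}$, $|\nabla\zeta_j|\lesssim 2^j/\rho$ and $|\partial_t\zeta_j|\lesssim 2^{j(\beta+1)/\beta}/\rho^{(\beta+1)/\beta}$, matched to the intrinsic scaling of $Q_\rho$.

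Applying the energy estimate of Section \ref{energysect} to the truncation $(v-k_j)_+$ with weight $\zeta_j$ yields, schematically, both a parabolic $L^\infty_t L^{\beta+1}_x$ bound on $(v-k_j)_+$ and an $L^{\gamma+1}$ bound on $\nabla(v^\beta-k_j^\beta)_+$ over $Q_{j+1}$. The right-hand side is a sum of (a) the cut-off errors $(2^j/\rho)^{\gamma+1}\iint_{Q_j}(v^\beta-k_j^\beta)_+^{\gamma+1}\,dx\,dt$ and $(2^{j(\beta+1)/\beta}/\rho^{(\beta+1)/\beta})\iint_{Q_j}(v-k_j)_+^{\beta+1}\,dx\,dt$, (b) the source-term contribution bounded by $\|f\|_{L^{(\gamma+1)\sigma/\gamma}}\,|A_j|^{1-1/\sigma}$, where $A_j:=\{v>k_j\}\cap Q_j$, and (c) the topography term controlled via $\|\nabla z\|_{L^{\beta(\gamma+1)\sigma}}$ using Young's inequality. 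I would then feed $w_j:=(v^\beta-k_j^\beta)_+\zeta_j$ into the parabolic Gagliardo--Nirenberg inequality with spatial exponent $\gamma+1$ and time exponent tied to $(\beta+1)/\beta$ (so that $\sup_t\int w_j^{(\beta+1)/\beta}dx$ is controlled by the already obtained $L^\infty_t L^{\beta+1}_x$ bound on $(v-k_j)_+$), upgrading the energy to an $L^{(\gamma+1)(1+\theta)}$ bound on $w_j$ with a gain $\theta=\theta(n,\alpha,\gamma)>0$.

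On $\{v>k_{j+1}\}\cap Q_{j+1}$ one has $(v^\beta-k_j^\beta)_+\gtrsim k^\beta 2^{-j}$ (using $\beta\geq 1$), which lets the upgraded bound control $|A_{j+1}|$. Setting
\begin{align*}
Y_j:=\iint_{Q_j}(v-k_j)_+^{\beta(\gamma+1)}\,dx\,dt+k^{\beta(\gamma+1)}|A_j|
\end{align*}
and combining all the previous estimates with Hölder's inequality produces a recurrence of the form $Y_{j+1}\leq C\,b^{j}\,\rho^{-a}\,k^{-\eta}\,Y_j^{1+\delta}$ with positive structural constants $a,\eta,\delta$. The classical geometric iteration lemma then forces $Y_j\to 0$, i.e.\ $v\leq k$ almost everywhere on $Q_{\rho/2}(z_o)$, as soon as $Y_0$ is sufficiently small. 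Choosing
\begin{align*}
k^{\beta+1}=c\,\rho^{-(n+\gamma+1)}\Bigl(1+\iint_{Q_\rho(z_o)} v^{\beta(\gamma+1)}\,dx\,dt\Bigr)
\end{align*}
with a constant $c$ absorbing the $\|f\|_{L^\sigma}$ and $\|z\|_{W^{1,\beta(\gamma+1)\sigma}}$ factors realises the required smallness and delivers the quantitative estimate in the stated form.

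The hardest step is reconciling the two natural scales of $v$ that appear in the equation: the time derivative produces an energy in the scale $(v-k_j)_+^{\beta+1}$, whereas the elliptic Sobolev embedding acts on $v^\beta$, as already emphasised in the introduction. The correct interpolation exponent in the parabolic Gagliardo--Nirenberg inequality must be chosen so that these two powers become compatible and the resulting gain $\theta$ is genuinely positive. In addition, the perturbative term $\beta v^{\alpha/\gamma}\nabla z$ inside the vector field $A(v,\nabla v^\beta)$ must be absorbed into the gradient part of the energy via Young's inequality, and this is precisely where the quantitative assumption $\sigma>(n+\gamma+1)/(\gamma+1)$ is needed to leave a positive gain after all data-dependent contributions have been controlled.
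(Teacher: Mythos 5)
Your proposal is correct and follows essentially the same route as the paper: a De Giorgi iteration on intrinsically scaled cylinders with time length $\rho^{(\beta+1)/\beta}$, feeding the Caccioppoli estimates of Section \ref{energysect} into a slice-wise H\"older--Sobolev argument that reconciles the $(v^{\frac{\beta+1}{2}}-k^{\frac{\beta+1}{2}})_+^2$ parabolic term with the $\nabla(v^\beta-k^\beta)_+$ elliptic term, estimating the data term $|f|^{\frac{\gamma+1}{\gamma}}+|\nabla z|^{\beta(\gamma+1)}$ by H\"older on the level sets (which is where $\sigma>\frac{n+\gamma+1}{\gamma+1}$ yields $\delta>0$), and closing with the geometric iteration lemma and the same choice of $k$ (your $k^{\beta+1}$ matches the paper's $k^{\beta m}$ since $\beta m=\beta+1$). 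The only differences are cosmetic: the paper takes $k_j=k(1-2^{-j})^{\frac{2}{\beta+1}}$ and works with $Y_j=\iint_{Q_j}(v^{\frac{\beta+1}{2}}-k_j^{\frac{\beta+1}{2}})_+^{\frac{2\beta(\gamma+1)}{\beta+1}}$, deriving the level-set measure bound from $Y_j$ rather than building $k^{\beta(\gamma+1)}|A_j|$ into the iterated quantity.
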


We conclude this section by giving a remark on the assumptions for $v$, $z$ and $f$.
\begin{rem}
Since our boundedness result is local it would be sufficient to assume that $v$, $z$ and $f$ belong only to local  Sobolev- and Lebesgue-spaces. 
\end{rem}

\section{Preliminaries}
\label{Preliminaries}

Here we will introduce some notation and present auxiliary tools that will be helpful in the course of the paper.

\subsection{Notation}
 With $B_\rho(x_o)$ we denote the open ball in $\R^n$ with radius $\rho$ and center $x_o$. Furthermore, for$z_o:=(x_o,t_o)\in \Omega_T$  we define a space-time cylinder 
$$Q_{\rho,\theta}(z_o):=B_\rho(x_o)\times (t_o-\theta,t_o+\theta).$$ 
For $v,w \geq 0$ we define 
\begin{align}
\label{definition:F}
\b[v,w]:= \tfrac{\beta}{\beta+1} \big(w^{\beta+1}-v^{\beta+1}\big) +v\big(v^\beta-w^\beta\big),
\end{align}
where $\beta$ is as in \eqref{definition:beta}.  For convenience we will sometimes use the short hand notation $v(\cdot,t)=v(t)$ for $t\in [0,T]$.

\subsection{Auxiliary tools}

We now recall some elementary lemmas that will be used later, and start by defining a mollification in time as it was done in \cite{KiLi}, see also \cite{BoeDuMa}. For $T>0$, $t\in [0,T]$, $h\in (0,T)$ and $w\in L^1(\Omega_T)$   we set
\begin{align}
\label{def:moll}
w_h(x,t):=\frac{1}{h}\int^t_0 e^\frac{s-t}{h}w(x,s)\d s.
\end{align}
Moreover, we define the reversed analogue by
\begin{align*}
w_{\overline h}(x,t) :=\frac{1}{h}\int^T_t e^\frac{t-s}{h}w(x,s)\d s.
\end{align*}
For details regarding the properties of the exponential mollification we refer to \cite[Lemma 2.2]{KiLi},  \cite[Lemma 2.2]{BoeDuMa}, \cite[Lemma 2.9]{St}. The properties of the mollification that we will use have been collected for convenience into the following lemma:

\begin{lem}
\label{expmolproperties} Suppose that $w \in L^1(\Omega_T)$. Then the mollification $w_h$ defined in \eqref{def:moll} has the following properties:
\begin{enumerate}
\item[(i)] 
 If $w\in L^p(\Omega_T)$ then $w_h\in L^p(\Omega_T)$, 
$$
\norm{w_h}_{L^p(\Omega_T)}\leq \norm{w}_{L^p(\Omega_T)},
$$
 and $w_h\to w$ in $L^p(\Omega_T)$.
\item[(ii)]
In the above situation, $w_h$ has a weak time derivative $\partial_t w_h$ on $\Omega_T$ given by
\begin{align*}
\partial_t w_h=\tfrac{1}{h}(w-w_h),
\end{align*}
whereas for $w_{\overline h}$ we have
\begin{align*}
\partial_t w_{\overline h}=\tfrac{1}{h}(w_{\overline h}-w).
\end{align*}
\item[(iii)] 
If $w\in L^p(0,T;W^{1,p}(\Omega))$ then $w_h\to w$ in $L^p(0,T;W^{1,p}(\Omega))$ as $h\to 0$.
\item[(iv)] If $w\in L^p(0,T;L^{p}(\Omega))$ then $w_h \in C^0([0,T];L^{p}(\Omega))$.
\end{enumerate}
\end{lem}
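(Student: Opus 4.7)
The plan is to verify (i)--(iv) by direct computation from the explicit formula \eqref{def:moll}; the analogous statements for $w_{\overline h}$ follow by the time reversal $t\mapsto T-t$, so I will concentrate on $w_h$ throughout.

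For (i), the key observation is that $w_h(x,t)$ is an average of $w(x,\cdot)$ against the sub-probability measure on $(0,t)$ with density $\tfrac{1}{h}e^{(s-t)/h}$, whose total mass $1-e^{-t/h}$ is at most $1$. H\"older's inequality with exponents $p$ and $p/(p-1)$ yields the pointwise bound
\[
|w_h(x,t)|^p \leq \int_0^t \tfrac{1}{h}e^{(s-t)/h}|w(x,s)|^p\d s.
\]
Integrating over $\Omega_T$ and swapping the order of integration by Fubini, the identity $\int_s^T \tfrac{1}{h}e^{(s-t)/h}\d t=1-e^{(s-T)/h}\leq 1$ gives the claimed contraction. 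For the convergence $w_h\to w$ in $L^p(\Omega_T)$ I would first verify it on the dense subclass $C^0_c(\Omega_T)$: the substitution $\tau=(t-s)/h$ recasts the mollification as $\int_0^{t/h}e^{-\tau}w(x,t-h\tau)\d\tau$, and two applications of dominated convergence yield first pointwise and then $L^p$-convergence. A standard $3\epsilon$-argument combined with the uniform contraction just established extends the convergence to all of $L^p(\Omega_T)$.

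For (ii), I would differentiate the product representation $w_h(x,t)=e^{-t/h}\cdot\tfrac{1}{h}\int_0^t e^{s/h}w(x,s)\d s$; the product rule together with the Lebesgue differentiation theorem (applicable since $w(x,\cdot)\in L^1(0,T)$ for a.e.\ $x$ by Fubini) yields the classical identity $\partial_t w_h=\tfrac{1}{h}(w-w_h)$ almost everywhere. Since $t\mapsto w_h(x,t)$ is absolutely continuous for a.e.\ $x$, the classical derivative agrees with the distributional one; alternatively, one may test against $\varphi\in C^\infty_0(\Omega_T)$ and swap the order of integration to reach the same conclusion. Part (iii) is then immediate from (i) together with the commutation $\nabla w_h=(\nabla w)_h$: applying (i) componentwise to $w$ and to $\nabla w$ gives both the $L^p(0,T;W^{1,p}(\Omega))$-bound and the convergence.

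For (iv), I would estimate the $L^p(\Omega)$-norm of
\[
w_h(t_2)-w_h(t_1)=(e^{-t_2/h}-e^{-t_1/h})\cdot \tfrac{1}{h}\int_0^{t_1}e^{s/h}w(\cdot,s)\d s+e^{-t_2/h}\cdot \tfrac{1}{h}\int_{t_1}^{t_2}e^{s/h}w(\cdot,s)\d s
\]
by Minkowski's integral inequality: the first summand vanishes as $t_2\to t_1$ by continuity of $t\mapsto e^{-t/h}$ and the fact that the enclosed Bochner integral is uniformly bounded in $L^p(\Omega)$, while the second vanishes by absolute continuity of the Bochner integral. The main subtlety of the whole lemma is the $L^p$-convergence in (i): it cannot be obtained by a direct pointwise argument for an arbitrary $w\in L^p$ and genuinely requires the density-plus-uniform-bound combination described above, whereas everything else reduces to transparent manipulations of the explicit exponential kernel.
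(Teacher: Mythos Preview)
Your argument is correct. The paper itself does not prove (i)--(iii) at all: it simply cites \cite[Lemma~2.2]{KiLi}, \cite[Lemma~2.2]{BoeDuMa}, and \cite[Lemma~2.9]{St} for these properties, so your direct verification via the sub-probability kernel, the density-plus-contraction argument for $L^p$-convergence, and the commutation $(\nabla w)_h=\nabla w_h$ is more self-contained than what the paper provides.

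For (iv) the two approaches genuinely differ. The paper observes that (i) gives $w_h\in L^p(\Omega_T)$ and (ii) gives $\partial_t w_h=\tfrac{1}{h}(w-w_h)\in L^p(\Omega_T)$, and then invokes the abstract embedding theorem \cite[Ch.~5.9, Theorem~2]{Ev} (functions in $L^p(0,T;X)$ with time derivative in $L^p(0,T;X)$ lie in $C^0([0,T];X)$). Your route is a hands-on estimate of $w_h(t_2)-w_h(t_1)$ via the explicit kernel and Minkowski's inequality. Your argument is more elementary and avoids the Bochner-space machinery, while the paper's one-line deduction is cleaner once those tools are available. One small imprecision: the Bochner integral $\tfrac{1}{h}\int_0^{t_1}e^{s/h}w(\cdot,s)\d s$ is not \emph{uniformly} bounded in $L^p(\Omega)$ as $t_1$ varies (it grows like $e^{t_1/h}$); what you actually need, and what suffices for continuity at a fixed $t_1$, is that this integral is a fixed finite element of $L^p(\Omega)$, which is immediate.
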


Note that (iv) follows from (i) and (ii) and \cite[Ch.~5.9, Theorem 2]{Ev}.

The next Lemma provides us with some useful estimates for the quantity $\b[w,v]$ that was defined in \eqref{definition:F}. The proof can be found in \cite[Lemma 2.3]{BoDuKoSc}.

\begin{lem}
\label{estimates:boundary_terms}
Let $w,v \geq 0$ and $\beta>0$. Then there exists a constant $c$ depending only on $\beta$ such that
\begin{align*}
\tfrac 1 c\big| v^{\frac{\beta+1}{2}}-w^{\frac{\beta+1}{2}} \big|^2 \leq \b[w,v] \leq c  \big| v^{\frac{\beta+1}{2}}-w^{\frac{\beta+1}{2}} \big|^2.
\end{align*} 
\end{lem}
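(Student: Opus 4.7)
The plan is to reduce the two-sided bound to a one-variable inequality via the natural $(\beta+1)$-homogeneity of both sides, and then close everything off with a compactness/continuity argument for the ratio of two smooth nonnegative functions that share a single common zero.

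First I would expand the definition of $\b[w,v]$ and regroup the terms to obtain the algebraic form
$$
\b[w,v] = \tfrac{\beta}{\beta+1}\,v^{\beta+1}+\tfrac{1}{\beta+1}\,w^{\beta+1}-wv^{\beta}.
$$
Nonnegativity is then immediate from Young's inequality $wv^{\beta}\le \tfrac{1}{\beta+1}w^{\beta+1}+\tfrac{\beta}{\beta+1}v^{\beta+1}$ (conjugate exponents $\beta+1$ and $(\beta+1)/\beta$), which also gives equality only when $w=v$. In the nontrivial case $v>0$ I would factor out $v^{\beta+1}$ and set $t:=w/v\in[0,\infty)$, writing
$$
\b[w,v]=v^{\beta+1}\Phi(t),\qquad \Phi(t):=\tfrac{\beta}{\beta+1}+\tfrac{1}{\beta+1}t^{\beta+1}-t,
$$
and
$$
\bigl|v^{(\beta+1)/2}-w^{(\beta+1)/2}\bigr|^{2}=v^{\beta+1}\Psi(t),\qquad \Psi(t):=\bigl(1-t^{(\beta+1)/2}\bigr)^{2}.
$$
The degenerate case $v=0$ is handled directly by $\b[w,0]=\tfrac{1}{\beta+1}w^{\beta+1}$, which is exactly $\tfrac{1}{\beta+1}$ times $w^{\beta+1}=|w^{(\beta+1)/2}|^{2}$. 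The claim therefore reduces to producing constants $c_{1}(\beta),c_{2}(\beta)>0$ with $c_{1}\Psi(t)\le \Phi(t)\le c_{2}\Psi(t)$ for all $t\ge 0$.

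The functions $\Phi,\Psi$ are smooth on $(0,\infty)$, continuous on $[0,\infty)$, nonnegative, and each vanishes only at $t=1$. I would check the three relevant boundary limits of the ratio $F(t):=\Phi(t)/\Psi(t)$:
$$
\lim_{t\to 0^{+}}F(t)=\tfrac{\beta}{\beta+1},\qquad \lim_{t\to\infty}F(t)=\tfrac{1}{\beta+1},
$$
from the obvious leading-order behaviour. At the common zero $t=1$ a short computation gives $\Phi''(1)=\beta$ and $\Psi''(1)=\tfrac{(\beta+1)^{2}}{2}$, so two applications of L'Hospital's rule (or a Taylor expansion to second order) yield
$$
\lim_{t\to 1}F(t)=\frac{\Phi''(1)}{\Psi''(1)}=\frac{2\beta}{(\beta+1)^{2}}\in(0,\infty).
$$
Consequently $F$ extends to a strictly positive continuous function on the two-point compactification $[0,\infty]$, so its infimum and supremum depend only on $\beta$ and are both strictly positive and finite.

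The only delicate ingredient is the limit at $t=1$: both $\Phi$ and $\Psi$ vanish precisely to second order there, so one must verify that neither function degenerates faster than the other. This is exactly what the two explicit values of the second derivatives guarantee. Once the continuous extension is in place, multiplying $c_{1}\Psi(t)\le \Phi(t)\le c_{2}\Psi(t)$ through by $v^{\beta+1}$ and invoking the homogeneous representations concludes the proof, with $c(\beta)=\max\{c_{2},1/c_{1}\}$.
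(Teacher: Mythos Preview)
Your argument is correct: the homogeneity reduction to the one-variable ratio $F(t)=\Phi(t)/\Psi(t)$, the explicit limits at $t=0,1,\infty$, and the second-order Taylor analysis at $t=1$ are all accurate, and the continuous extension of $F$ to $[0,\infty]$ yields the desired two-sided bound with constants depending only on $\beta$. The paper itself does not supply a proof of this lemma but simply refers to \cite[Lemma~2.3]{BoDuKoSc}; your self-contained argument is a standard and clean way to establish the result.
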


The following lemma can be proven using an inductive argument, see for example \cite[Lemma 7.1]{Gi}.

\begin{lem}
\label{fastconvg} Let $(Y_j)^\infty_{j=0}$ be a positive sequence such that 
\begin{equation*}
Y_{j+1}\leq C b^j Y^{1+\delta}_j,
\end{equation*}
where $C, b >1$ and $\delta>0$. If
\begin{equation*}
Y_0\leq C^{-\frac{1}{\delta}}b^{-\frac{1}{\delta^2}},
\end{equation*}
then $(Y_j)$ converges to zero as $j\to\infty$.
\end{lem}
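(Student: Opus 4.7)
The plan is to show by induction that
\[
Y_j \leq Y_0\,\lambda^j \qquad \text{for all } j\geq 0,
\]
for a suitable constant $\lambda \in (0,1)$. Since $\lambda<1$, this decay estimate immediately yields $Y_j \to 0$ as $j\to \infty$, which is the conclusion of the lemma.

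First I would look for the right value of $\lambda$ by plugging the ansatz into the recurrence. Assuming inductively that $Y_j \leq Y_0 \lambda^j$, the hypothesis gives
\[
Y_{j+1} \leq C\,b^j\,(Y_0\lambda^j)^{1+\delta} = C\,Y_0^{\delta}\,(b\lambda^{\delta})^{j}\,Y_0\,\lambda^{j}.
\]
So the inductive step will close as long as $C\,Y_0^{\delta}\,(b\lambda^{\delta})^{j} \leq \lambda$ for every $j\geq 0$. The cleanest way to guarantee this for all $j$ simultaneously is to kill the $j$-dependence by choosing $\lambda$ such that $b\lambda^{\delta}=1$, i.e.\ $\lambda \defeq b^{-1/\delta}$. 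Since $b>1$ and $\delta>0$, indeed $\lambda\in(0,1)$. With this choice the remaining condition reduces to $C\,Y_0^{\delta}\leq \lambda = b^{-1/\delta}$, which is exactly the assumption $Y_0 \leq C^{-1/\delta} b^{-1/\delta^{2}}$.

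The actual write-up then proceeds in three short steps: (i) fix $\lambda=b^{-1/\delta}$ and record that $\lambda\in(0,1)$; (ii) verify the base case $j=0$ trivially; (iii) carry out the one-line induction as above, invoking the smallness assumption on $Y_0$ in the last inequality. Finally, from $0\leq Y_j \leq Y_0 \lambda^j$ and $\lambda<1$ one concludes $Y_j \to 0$.

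There is no real obstacle here, as the proof is a clean induction; the only subtlety worth highlighting is the choice of $\lambda$, which is dictated by the need to cancel the geometric factor $b^j$ that is amplified by the superlinear exponent $1+\delta$. It is precisely this cancellation that explains the seemingly strange appearance of $\delta^2$ in the smallness threshold for $Y_0$.
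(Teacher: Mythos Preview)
Your proof is correct and is exactly the standard inductive argument the paper has in mind; the paper does not spell out a proof but simply refers to \cite[Lemma~7.1]{Gi}, noting that it ``can be proven using an inductive argument''. Your choice $\lambda=b^{-1/\delta}$ and the verification $Y_j\le Y_0\lambda^j$ is precisely that argument.
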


\subsection{Continuity in time}
\label{sec:cont_time}
In this section we will show that for every weak solution $u$ of the DSW equation we have that $v=u-z\in C^0([0,T];L^{\beta+1}_{\mathrm{loc}}(\Omega))$. This method is based on the proof of continuity in time of \cite[Section 3.8]{St}. We are only able to show a local version, which is not surprising since our weak formulation is only of local nature. We start by showing an auxiliary lemma.

\begin{lem}
\label{lem:cont_time_step1}
Assume that $v$ is a weak solution to \eqref{reformulated} and by
\begin{align*}
\mathcal{V}:=\left\{ w^\beta\in L^{\gamma+1}(0,T;W^{1,\gamma+1}(\Omega)): \partial_t w^\beta \in L^{\frac{\beta+1}{\beta}}(\Omega_T) \right\}.
\end{align*}
we  denote the class of admissible test functions. Then, for every $\zeta\in C^\infty_0(\Omega_T,\R_{\geq 0})$  and $w\in \mathcal V$  we have
\begin{align}
\label{eq:time_1}
\notag- \iint_{\Omega_T} \partial_t \zeta \b[v,w] \d x\d t =& \iint_{\Omega_T} \zeta \partial_t w^\beta (w-v) -\zeta f(w^\beta-v^\beta) \d x\d t\\
& \quad+ \iint_{\Omega_T} A(v,\nabla v^\beta) \cdot \nabla[\zeta(w^\beta-v^\beta)] \d x\d t.
\end{align}
\end{lem}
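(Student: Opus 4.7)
The identity \eqref{eq:time_1} is the rigorous counterpart of what one would obtain formally by testing the weak formulation \eqref{weakform} with $\varphi = \zeta(w^\beta - v^\beta)$ and integrating by parts in time so as to regroup the parabolic contributions into $-\iint_{\Omega_T}\partial_t\zeta\,\b[v,w]\,\d x\d t$. The obstruction is that $v$ has no weak time derivative, so this substitution is not literally admissible and has to be justified by an approximation procedure.

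The plan is to argue via the exponential time mollification of Lemma \ref{expmolproperties}. By density of $C^\infty_0(\Omega_T)$, the weak formulation \eqref{weakform} extends to test functions compactly supported in $\Omega_T$ which belong to $L^{\gamma+1}(0,T;W^{1,\gamma+1}(\Omega))$ and whose weak time derivative lies in $L^{\frac{\beta+1}{\beta}}(\Omega_T)$; the slow-diffusion condition $\alpha+\gamma>1$ yields $v\in L^{\beta+1}_{\mathrm{loc}}(\Omega_T)$, so the duality pairing $\iint v\,\partial_t\varphi$ makes sense in this larger class. I would then insert
\begin{align*}
\varphi_h:=\zeta(w^\beta - [v^\beta]_h)
\end{align*}
into the extended weak formulation. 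This is admissible because $[v^\beta]_h$ shares the $L^{\gamma+1}(0,T;W^{1,\gamma+1}(\Omega))$-regularity of $v^\beta$ and has a weak time derivative $\partial_t[v^\beta]_h = h^{-1}(v^\beta - [v^\beta]_h)\in L^{\frac{\beta+1}{\beta}}(\Omega_T)$, while $\partial_t w^\beta$ has the same integrability since $w\in\mathcal V$.

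It then remains to pass to the limit $h\to 0$. The diffusion term and the source term converge to their expected limits by Lemma \ref{expmolproperties}(i),(iii). The parabolic term $-\iint v\,\partial_t\varphi_h$ splits via the product rule into three pieces; two of them converge immediately, and the critical one is $\iint_{\Omega_T} v\zeta\,\partial_t[v^\beta]_h\,\d x\d t$. The main difficulty lies here, since neither factor converges in a function space compatible with the other as $h\to 0$, so a naive product-rule limit is unavailable. Using the duality relation $\iint [g]_h\eta\,\d x\d t = \iint g\,\eta_{\overline h}\,\d x\d t$ of the exponential mollification together with the explicit formula $\partial_t[v^\beta]_h = h^{-1}(v^\beta - [v^\beta]_h)$, I would shift the mollification between the two factors and integrate by parts in time against $\zeta$. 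Assembling all the limits and recognising the resulting combination through the algebraic identity
\begin{align*}
\b[v,w] = \tfrac{\beta}{\beta+1}w^{\beta+1} + \tfrac{1}{\beta+1}v^{\beta+1} - vw^\beta,
\end{align*}
together with $w\,\partial_t w^\beta = \tfrac{\beta}{\beta+1}\partial_t w^{\beta+1}$ (a legitimate integration by parts now that $w\in\mathcal V$ has a weak time derivative), reproduces exactly \eqref{eq:time_1}.
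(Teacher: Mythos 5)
Your starting move coincides with the paper's: extend the weak formulation to the larger test class and insert $\varphi_h=\zeta\big(w^\beta-[v^\beta]_h\big)$. The gap is in your treatment of the critical term $\iint_{\Omega_T}\zeta v\,\partial_t[v^\beta]_h\,\d x\d t$. Shifting the mollification by the duality relation only transforms it into $-\iint_{\Omega_T} v^\beta\,\partial_t[\zeta v]_{\overline h}\,\d x\d t$, which contains exactly the same $h^{-1}$-singular factor ($\partial_t[\,\cdot\,]_h=h^{-1}(\,\cdot-[\,\cdot\,]_h)$), so "assembling all the limits" is not justified: neither representation lets you pass to the limit, because the would-be limit $\iint \zeta v\,\partial_t v^\beta$ has no meaning ($v$ has no weak time derivative) and no compensated convergence is exhibited. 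Note also that you cannot appeal to time continuity of $v$ in $L^{\beta+1}_{\mathrm{loc}}$ to identify the limit, since that continuity (Lemma \ref{lem:cont_time}) is proved \emph{afterwards} and uses precisely the present lemma.

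What the paper does instead, and what your argument is missing, is a monotonicity (sign) argument combined with a two-sided mollification. One writes
\begin{align*}
\zeta v\,\partial_t[v^\beta]_h=\zeta\,[v^\beta]_h^{\frac1\beta}\,\partial_t[v^\beta]_h-\zeta\big([v^\beta]_h^{\frac1\beta}-v\big)\partial_t[v^\beta]_h,
\end{align*}
computes the first piece exactly by the chain rule, $\zeta[v^\beta]_h^{1/\beta}\partial_t[v^\beta]_h=\tfrac{\beta}{\beta+1}\zeta\,\partial_t[v^\beta]_h^{\frac{\beta+1}{\beta}}$, and observes that the second piece has a sign, $\big([v^\beta]_h^{1/\beta}-v\big)\partial_t[v^\beta]_h\le 0$, because $[v^\beta]_h^{1/\beta}-v$ and $\partial_t[v^\beta]_h=h^{-1}(v^\beta-[v^\beta]_h)$ have opposite signs pointwise. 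Discarding this nonpositive term and letting $h\downarrow0$ in the remaining (well-behaved) terms yields only the inequality ``$\geq$'' in \eqref{eq:time_1}; the reverse inequality then requires repeating the whole argument with the backward mollification $\varphi=\zeta\big(w^\beta-[v^\beta]_{\overline h}\big)$, for which the analogous difference term has the opposite sign. Your proposal neither uses this sign information nor the backward mollification, and claims the identity in a single pass; as written, the limit of the critical term is unproven, so the argument does not close.
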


\begin{proof}[Proof]
Let $w\in \mathcal V$ and $\zeta \in C^\infty_0(\Omega_T,\R_{\geq 0})$ and choose 
$$
\varphi =\zeta \left( w^\beta-[v^\beta]_h \right)
$$
as testing function in \eqref{weakform}. Moreover,  Lemma \ref{expmolproperties} (ii) implies 
$$
 \big( [v^\beta]_h^{\frac 1 \beta} -v\big) \partial_t [v^\beta]_h\leq 0,
$$
and we can treat the parabolic part as follows
\begin{align*}
\iint_{\Omega_T} v\partial_t \varphi \d x\d t&=\iint_{\Omega_T} \zeta v \partial_t w^\beta \d x\d t -\iint_{\Omega_T} \zeta [v^\beta]_h^{\frac 1 \beta} \partial_t [v^\beta]_h \d x\d t\\
&\quad+ \iint_{\Omega_T} \zeta \big( [v^\beta]_h^{\frac 1 \beta} -v\big) \partial_t [v^\beta]_h \d x\d t+ \iint_{\Omega_T} \partial_t \zeta v \big( w^\beta-[v^\beta]_h \big) \d x\d t \\
&\leq \iint_{\Omega_T} \zeta v \partial_t w^\beta\d x\d t+ \iint_{\Omega_T} \tfrac{\beta}{\beta+1} \partial_t \zeta [v^\beta]_h^{\frac{\beta+1}{\beta}} \d x\d t \\
&\quad +\iint_{\Omega_T} \partial_t \zeta v \big(w^\beta-[v^\beta]_h \big) \d x\d t \\
&\to \iint_{\Omega_T} \zeta v \partial_t w^\beta \d x\d t+ \iint_{\Omega_T} \partial_t \zeta \big(\tfrac{\beta}{\beta+1} v^{\beta+1}+v(w^\beta-v^\beta) \big) \d x\d t\\
&=\iint_{\Omega_T} \zeta \partial_t w^\beta (v-w) \d x\d t -\iint_{\Omega_T} \partial_t \zeta \b[v,w] \d x \d t,
\end{align*}
in the limit $h\downarrow 0$. This shows ``$\geq$'' in \eqref{eq:time_1}. The reverse inequality can be derived in the same way by taking 
$$
\varphi =\zeta \left( w^\beta-[v^\beta]_{\overline h} \right)
$$
as testing function. 
\end{proof}

This brings us into position to show that $v$ is continuous as map from $[0,T]$ to $L^{\beta+1}(\Omega_T)$. To be more precise, we will show the following lemma:

\begin{lem}
\label{lem:cont_time}
Assume that $v$ is a weak solution to \eqref{reformulated}. Then $v\in C^0([0,T];L^{\beta+1}_{\mathrm{loc}}(\Omega))$.
\end{lem}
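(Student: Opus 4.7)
The plan is to exhibit $v$ as the (locally) uniform-in-$t$ limit in $L^{\beta+1}_{\mathrm{loc}}(\Omega)$ of time-continuous approximants $\tilde v_h:=([v^\beta]_h)^{1/\beta}$; since a uniform limit of continuous functions is continuous, this yields the claim.

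First I would verify $\tilde v_h\in\mathcal V$ and $\tilde v_h\in C^0([0,T];L^{\beta+1}_{\mathrm{loc}}(\Omega))$. The spatial regularity $\tilde v_h^\beta=[v^\beta]_h\in L^{\gamma+1}(0,T;W^{1,\gamma+1}(\Omega))$ is inherited from $v^\beta$ via Lemma~\ref{expmolproperties}(i),(iii); Lemma~\ref{expmolproperties}(ii) combined with the slow-diffusion condition $\alpha+\gamma>1$, which is equivalent to $(\beta+1)/\beta\le\gamma+1$, gives $\partial_t\tilde v_h^\beta=\tfrac{1}{h}(v^\beta-\tilde v_h^\beta)\in L^{(\beta+1)/\beta}(\Omega_T)$. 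Time continuity in $L^{\beta+1}_{\mathrm{loc}}$ follows from the chain-rule identity $\partial_t\tilde v_h^{\beta+1}=\tfrac{\beta+1}{\beta h}\tilde v_h(v^\beta-\tilde v_h^\beta)\in L^1_{\mathrm{loc}}(\Omega_T)$ (H\"older with $v\in L^{\beta(\gamma+1)}(\Omega_T)$), so that $\tilde v_h^{\beta+1}\in C^0([0,T];L^1_{\mathrm{loc}}(\Omega))$; a standard convergence-in-measure plus norm-convergence argument then upgrades this to $\tilde v_h\in C^0([0,T];L^{\beta+1}_{\mathrm{loc}}(\Omega))$.

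Next I would apply Lemma~\ref{lem:cont_time_step1} with $w=\tilde v_h$. Since $\tau\mapsto\tau^\beta$ is monotone, $(v^\beta-\tilde v_h^\beta)$ and $(v-\tilde v_h)$ share sign, so the parabolic integrand $\zeta\partial_t\tilde v_h^\beta(\tilde v_h-v)=\tfrac{\zeta}{h}(v^\beta-\tilde v_h^\beta)(\tilde v_h-v)\le 0$ may be dropped. Choosing $\zeta(x,t)=\eta(x)\psi_\epsilon(t)$, with $\eta\in C^\infty_0(\Omega,[0,1])$ equal to $1$ on a fixed $K\Subset\Omega$ and $\psi_\epsilon$ a piecewise-linear tent approximating $\chi_{[s,t]}$, and passing $\epsilon\downarrow 0$ at Lebesgue points of $\tau\mapsto\int_\Omega\eta\,\b[v(\tau),\tilde v_h(\tau)]\d x$, one obtains the one-sided estimate
\begin{align*}
\Phi_h(t)-\Phi_h(s)\le E_h(s,t),\qquad\Phi_h(\tau):=\int_\Omega\eta\,\b[v(\tau),\tilde v_h(\tau)]\d x,
\end{align*}
for a.e.\ $0<s<t<T$, where $E_h(s,t)$ collects the source- and diffusion-contributions on $\Omega\times[s,t]$. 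By H\"older's inequality and the strong convergence $\tilde v_h^\beta\to v^\beta$ in $L^{\gamma+1}(0,T;W^{1,\gamma+1}(\Omega))$ from Lemma~\ref{expmolproperties}(iii), $|E_h(s,t)|\to 0$ \emph{uniformly} in $s<t$ as $h\to 0$.

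At a Lebesgue point $s$ of $\tau\mapsto v^\beta(\tau)\in L^{\gamma+1}(\Omega)$ one has $\tilde v_h^\beta(s)\to v^\beta(s)$ in $L^{\gamma+1}(\Omega)$; together with the uniform $L^{\beta(\gamma+1)}$-bound this gives $\tilde v_h(s)\to v(s)$ in $L^{\beta+1}_{\mathrm{loc}}(\Omega)$, hence $\Phi_h(s)\to 0$ via Lemma~\ref{estimates:boundary_terms}. Combined with the one-sided inequality and the symmetric bound obtained by repeating the entire derivation with the reverse mollification $\tilde v_{\overline h}:=([v^\beta]_{\overline h})^{1/\beta}$ (whose parabolic integrand has the opposite sign and is therefore non-negative when dropped), one concludes $\sup_t\Phi_h(t)\to 0$; Lemma~\ref{estimates:boundary_terms} translates this to $\sup_t\|\tilde v_h(t)-v(t)\|_{L^{\beta+1}(K)}\to 0$, finishing the proof. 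The main obstacle is precisely this one-sidedness: dropping the non-positive parabolic term only controls increments $\Phi_h(t)-\Phi_h(s)$ for $s<t$, so the reverse mollification with the opposite-sign parabolic term is needed to obtain the two-sided control required for uniform convergence.
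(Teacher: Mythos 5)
Your proposal follows the same basic strategy as the paper: approximate $v$ by the time-continuous maps $\tilde v_h=[v^\beta]_h^{1/\beta}$, use them as comparison functions in Lemma~\ref{lem:cont_time_step1}, discard the signed parabolic term $\tfrac{\zeta}{h}(v^\beta-\tilde v_h^\beta)(\tilde v_h-v)\le 0$, send the remaining terms to zero via Lemma~\ref{expmolproperties} and H\"older, and translate $\b$ into the $L^{\beta+1}$-distance through Lemma~\ref{estimates:boundary_terms}. The genuine difference is how the ``initial'' contribution is handled. The paper chooses a time cutoff $\psi$ that ramps from $0$ to $1$ on an interval $[t_o-2\theta,t_o-\theta]$ strictly before the times of interest, so the initial term appears only as the space--time integral $\iint|\partial_t\psi|\,\b[v,\tilde v_h]\d x\d t$, which vanishes as $h\downarrow 0$ simply because $\tilde v_h\to v$ in $L^{\beta+1}(\Omega_T)$; consequently no Lebesgue-point-in-time argument and no reverse mollification are needed in this lemma (the reverse mollification enters only in the proof of Lemma~\ref{lem:cont_time_step1} itself). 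Your tent cutoff instead produces increments $\Phi_h(t)-\Phi_h(s)\le E_h(s,t)$, which obliges you to prove $\Phi_h(s)\to 0$ at a fixed Lebesgue instant and to rerun the derivation with $[v^\beta]_{\overline h}$. Both ingredients are workable, but this route is more delicate than the paper's for no gain.

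There is one step you should repair: the conclusion ``$\sup_t\Phi_h(t)\to 0$'' does not follow from what you have, and taken literally over all of $[0,T]$ it is false in general. The forward estimate controls $\Phi_h(t)$ only for $t$ to the right of a fixed Lebesgue point $s_0$ (the convergence $\Phi_h(s)\to 0$ is pointwise in $s$, so a single $s_0$ must be fixed), while the backward estimate controls the \emph{different} quantity $\Phi_{\overline h}(t)$ for $t$ to the left of a Lebesgue point $s_1$; neither statement bounds $\Phi_h(t)$ for $t$ comparable to or smaller than $h$, where $[v^\beta]_h(t)$ is close to $0$ rather than to $v^\beta(t)$, so that $\b[v(t),\tilde v_h(t)]\approx \tfrac{1}{\beta+1}v(t)^{\beta+1}$ need not be small. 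The correct assembly of your two one-sided estimates is: $\tilde v_h\to v$ uniformly in $L^{\beta+1}(K)$ on $[s_0,T]$ and $\tilde v_{\overline h}\to v$ uniformly on $[0,s_1]$, with Lebesgue points $s_0<s_1$ chosen from a common full-measure set along a sequence $h_j\downarrow 0$; since each approximant is continuous in time, $v$ is continuous on the two overlapping intervals and hence on $[0,T]$, which is the claim. Alternatively, adopting the paper's ramped cutoff removes the need for the Lebesgue-point selection and the backward-mollification companion estimate altogether.
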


\begin{proof}[Proof]
The strategy of the proof is to show that $v$ is the uniform limit of continuous functions in $ C^0([0,T];L^{\beta+1}_{\mathrm{loc}}(\Omega))$. By Lemma \ref{expmolproperties} (iv) we have  
$$[v^\beta]_h\in C^0([0,T];L^{\gamma+1}(\Omega))$$ and since $\frac 1\beta <\gamma$ this implies $[v^\beta]_h^{\frac 1 \beta}\in C^0([0,T];L^{\beta+1}(\Omega))$ as well.

For $z_o=(x_o,t_0)\in \Omega_T$  take $\theta,\rho>0$ such that $Q_{2\rho,2\theta}(z_o)\Subset \Omega_T$ and consider cut-off functions $\eta \in C^\infty_0(B_{2\rho}(x_o);[0,1])$ with $\eta \equiv 1$ in $B_\rho(x_o)$ and $|\nabla\eta|\leq \frac 2\rho$, and $\psi \in C^\infty([0,T];[0,1])$ with $\psi \equiv 1$ on $[t_o-\theta,T]$, $\psi \equiv 0$ on $[0,t_o-2\theta]$ and $|\partial_t\psi|\leq \frac 2\theta$ as well as the function 
$$
\chi_\varepsilon(t):=\left\{
\begin{array}{cl}
1 & \text{for } t\in [0,\tau-\varepsilon], \vspace{1mm}\\
-\frac 1\varepsilon (t-\tau) &\text{for } t\in [\tau-\varepsilon,\tau],\vspace{1mm}\\
0& \text{for }t \in [\tau,T],
\end{array}
\right.
$$
where $\tau\in (t_o-\theta,t_o+\theta)$  and $\varepsilon\in(0,t_o-\theta)$. We now exploit Lemma \ref{lem:cont_time_step1} and chose $\zeta=\eta\psi\chi_\varepsilon$ as cutting-off function and take $w\equiv w_h:=[v^\beta]_h^{\frac 1 \beta} $ as comparison map in \eqref{eq:time_1}, which shows for a.e.\ $\tau \in (t_o-\theta,t_o+\theta)$  in the limit $\varepsilon \downarrow 0$
\begin{align*}
\int_{B_\rho(x_o)\times\{\tau\}}\b[v,w_h] \d x &\leq \iint_{\Omega_\tau} \eta \psi \partial_t w_h^\beta (w_h-v) \d x\d t+\iint_{\Omega_\tau} \partial_t \psi \eta \b[v,w_h] \d x\d t\\
&\quad + \iint_{\Omega_\tau} A(v,\nabla v^\beta) \cdot \nabla[\eta \psi(w_h^\beta-v^\beta)] - \eta \psi f(w_h^\beta-v^\beta) \d x\d t\\
&\leq \iint_{Q_{2\rho,2\theta}(z_o)} |\partial_t \psi| \b[v,w_h]  +|f||w_h^\beta-v^\beta|\d x\d t\\
&\quad + \iint_{Q_{2\rho,2\theta}(z_o)} |A(v,\nabla v^\beta)|\left[|\nabla w_h^\beta-\nabla v^\beta|+|\nabla\eta||w_h^\beta-v^\beta| \right]\d x\d t
\end{align*}
By taking first the supremum over $\tau \in (t_o-\theta,t_o+\theta)$ and passing to the limit $h\downarrow 0$ afterwards we conclude
$$
\lim_{h\downarrow 0} \left(\esssup_{\tau \in(t_o-\theta,t_o+\theta)} \int_{B_\rho(x_o)} \b[v,w_h]\d x \right) =0.
$$
Note that this limiting process can be justified by using H\"older's inequality and then Lemma \ref{expmolproperties} for the terms involving the vector field $A$ and the source term $f$, whereas for the term involving the quantity $\b$ we can make use of the fact that $v\in L^{\beta+1}(\Omega_T)$. Finally using Lemma \ref{estimates:boundary_terms} and the fact that $\beta>1$ we end up with
$$
|v-w_h|^{\beta+1}\leq \big|v^{\frac{\beta+1}{2}}-w_h^{\frac{\beta+1}{2}}\big|^2 \leq c(\beta)\b[v,w_h], 
$$
which completes the proof of the lemma.
\end{proof}

\subsection{Mollified weak formulation}
In this section we derive a mollified version of \eqref{weakform}. The main purpose of this procedure is to justify the use of $v^\beta$ as a testing function. Note that such a step is necessary since weak solutions do not possess a weak time derivative in general.

\begin{lem}
\label{lem:mollified}
 Let  $v$ be a weak solution to the DSW equation \eqref{DSWeq} in the sense of Definition \ref{weakdef}. Then we have
\begin{align}\label{h-averaged-form}
\iint_{\Omega_T} [A(v, \nabla v^\beta)]_h\cdot \nabla \phi+\partial_t v_h \phi\d x\d t-\int_{\Omega}v(0)\phi_{\overline h}(0)\d x= \iint_{\Omega_T} f_h\phi\d x\d t,
\end{align}
for all $\phi \in C^\infty_0(\Omega_T)$.
\end{lem}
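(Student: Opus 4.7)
The plan is to rewrite every mollified quantity in \eqref{h-averaged-form} by transferring the mollification onto the test function $\phi$, thereby reducing the identity to the original weak formulation \eqref{weakform} applied to $\phi_{\overline h}$, with a boundary contribution at $t=0$ accounting for the fact that $\phi_{\overline h}$ fails to be compactly supported in time. The main tool is the Fubini identity
$$
\iint_{\Omega_T} w_h\,\psi\d x\d t = \iint_{\Omega_T} w\,\psi_{\overline h}\d x\d t,
$$
valid for $w\in L^1(\Omega_T)$ and bounded measurable $\psi$, which follows directly from interchanging integration in the exponential kernel of \eqref{def:moll}. Applying this with $(w,\psi) = (A(v,\nabla v^\beta),\nabla\phi)$ and with $(w,\psi) = (f,\phi)$ converts the first and last integrals in \eqref{h-averaged-form} into $\iint_{\Omega_T} A(v,\nabla v^\beta)\cdot\nabla\phi_{\overline h}\d x\d t$ and $\iint_{\Omega_T} f\,\phi_{\overline h}\d x\d t$, respectively.

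\textbf{Rewriting the time-derivative term.} For $\iint_{\Omega_T}\partial_t v_h\,\phi\d x\d t$ the plan is to invoke Lemma \ref{expmolproperties}(ii), which gives both $\partial_t v_h = h^{-1}(v-v_h)$ and, applied to the smooth function $\phi$, $\partial_t\phi_{\overline h} = h^{-1}(\phi_{\overline h}-\phi)$. Using the Fubini identity once more to replace $\iint v_h\phi\d x\d t$ by $\iint v\,\phi_{\overline h}\d x\d t$ yields
$$
\iint_{\Omega_T} \partial_t v_h\,\phi\d x\d t = \iint_{\Omega_T} v\,\frac{\phi-\phi_{\overline h}}{h}\d x\d t = -\iint_{\Omega_T} v\,\partial_t\phi_{\overline h}\d x\d t.
$$
Collecting the three rewrites, proving \eqref{h-averaged-form} reduces to establishing
$$
\iint_{\Omega_T} A(v,\nabla v^\beta)\cdot\nabla\phi_{\overline h} - v\,\partial_t\phi_{\overline h}\d x\d t = \iint_{\Omega_T} f\,\phi_{\overline h}\d x\d t + \int_{\Omega} v(0)\,\phi_{\overline h}(0)\d x,
$$
which is exactly \eqref{weakform} formally tested against $\phi_{\overline h}$, augmented by a boundary integral at $t=0$.

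\textbf{Handling $\phi_{\overline h}$ and the main obstacle.} Since $\phi\in C^\infty_0(\Omega_T)$, the function $\phi_{\overline h}$ is smooth in both variables with compact spatial support and satisfies $\phi_{\overline h}(\cdot,T)=0$, but it does \emph{not} vanish at $t=0$, so it is not directly admissible in \eqref{weakform}. To get around this I would insert a Lipschitz temporal cutoff $\chi_\varepsilon$ with $\chi_\varepsilon(0)=0$ and $\chi_\varepsilon\equiv 1$ on $[\varepsilon,T]$, test \eqref{weakform} against $\varphi=\chi_\varepsilon\phi_{\overline h}$ (justified by a standard density approximation through $C^\infty_0$ functions if strict smoothness is needed), and then send $\varepsilon\downarrow 0$. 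The terms involving $\chi_\varepsilon\nabla\phi_{\overline h}$, $\chi_\varepsilon\partial_t\phi_{\overline h}$, and $\chi_\varepsilon\phi_{\overline h}$ converge by dominated convergence thanks to the integrability built into Definition \ref{weakdef}, while the extra contribution
$$
\iint_{\Omega_T} v\,\chi_\varepsilon'(t)\,\phi_{\overline h}\d x\d t = \tfrac{1}{\varepsilon}\int_0^\varepsilon\!\!\int_\Omega v\,\phi_{\overline h}\d x\d t
$$
must tend to $\int_\Omega v(0)\,\phi_{\overline h}(0)\d x$. This last identification is the core obstacle, and it is precisely where Lemma \ref{lem:cont_time} is indispensable: it upgrades $v$ to an element of $C^0([0,T];L^{\beta+1}_{\mathrm{loc}}(\Omega))$, so that the trace $v(0)$ is well-defined and $v(t)\to v(0)$ in $L^{\beta+1}_{\mathrm{loc}}$ as $t\downarrow 0$, which is exactly what is needed to pass this time-average to the pointwise boundary value.
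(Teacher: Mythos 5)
Your argument is correct and follows essentially the same route as the paper: test the weak formulation \eqref{weakform} with $\phi_{\overline h}$ multiplied by a linear temporal cutoff vanishing at $t=0$, pass to the limit in the cutoff using the time-continuity from Lemma \ref{lem:cont_time} to identify the boundary term $\int_\Omega v(0)\phi_{\overline h}(0)\d x$, and use the Fubini/change-of-variables identity to transfer the exponential mollification between $v$, $A(v,\nabla v^\beta)$, $f$ and $\phi$. The only difference is cosmetic: you apply the Fubini identities first and the cutoff second, while the paper does it in the reverse order.
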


\begin{proof}[Proof]
Consider the cut-off function 
\begin{align*}
\eta_\varepsilon (t):=\left\{
\begin{array}{cl}
\frac{t}{\varepsilon} & \text{for } t\in[0,\varepsilon]\vspace{1mm}, \\
1 &  \text{for } t\in (\varepsilon, T].
\end{array}
\right.
\end{align*}
For $\phi \in C^\infty_0(\Omega_T)$ we choose $ \varphi=\phi_{\overline h} \eta_\varepsilon$ as testing function in \eqref{weakform}. Letting $\varepsilon\downarrow 0$ is possible since Lemma \ref{lem:cont_time} implies that zero is a Lebesgue point. Using a change of variables in connection with a Fubini argument, as was done for instance in \cite[(3.1)]{BoDuKoSc} or \cite[(2.12)]{KiLi}, shows the claim. 
\end{proof}

\section{Energy Estimates}\label{energysect}

We now proceed to prove the energy estimates, i.e.\ the Caccioppoli inequalities necessary for performing a De Giorgi iteration. In order to do so,  we will make use of the mollified weak formulation, c.f.\ Lemma \ref{lem:mollified}.

\begin{lem}[Caccioppoli estimate] 
Let $v$ be a weak solution to \eqref{reformulated} in the sense of Definition \ref{weakdef}. Then, for all $k\geq 0$ and $\varphi\in C^\infty_0(\Omega_T;\R_{\geq 0})$,
\begin{align}\label{caciocavallo}
\iint_{\Omega_T}& |\nabla (v^\beta-k^\beta)_+|^{\gamma+1}\varphi^{\gamma+1}\d x\d t +\esssup_{\tau\in (0,T)}\int_\Omega\big(v^\frac{\beta+1}{2}-k^\frac{\beta+1}{2}\big)^2_+\varphi^{\gamma+1}(x,\tau)\d x
\\
\notag &\leq c\iint_{\Omega_T} |\nabla \varphi|^{\gamma+1} (v^\beta-k^\beta)^{\gamma+1}_+ \d x\d t  + c\iint_{\Omega_T} \big(v^{\frac{\beta+1}{2}}-k^{\frac{\beta+1}{2}} \big)^2_+ \varphi^\gamma |\partial_t \varphi| \d x\d t
 \\
\notag &\quad+  c\iint_{\Omega_T} |\nabla z|^{\gamma+1}\, v^{\frac{\alpha}{\gamma}(\gamma+1)}\varphi^{\gamma+1} \chi_{\{v>k\}}\d x\d t +c\iint_{\Omega_T} (v^\beta-k^\beta)_+ \varphi^{\gamma+1} |f| \d x\d t
\end{align}
for a constant $c$ that depends only on $\alpha$ and $\gamma$.
\end{lem}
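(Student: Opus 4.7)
The plan is to apply Lemma \ref{lem:cont_time_step1} with the cutoff
\[
\zeta(x,t) := \chi_\varepsilon(t)\varphi^{\gamma+1}(x,t),
\]
where $\chi_\varepsilon$ is a Lipschitz time cutoff that equals $1$ on $[0,\tau-\varepsilon]$ and $0$ on $[\tau,T]$ (as in the proof of Lemma \ref{lem:cont_time}), together with the mollified truncation
\[
w_h := \bigl(\min([v^\beta]_h, k^\beta)\bigr)^{1/\beta}, \qquad w_h^\beta = \min([v^\beta]_h, k^\beta).
\]
Since Lemma \ref{lem:cont_time} yields $v^\beta \in L^{(\beta+1)/\beta}(\Omega_T)$ and we have $v^\beta \in L^{\gamma+1}(0,T;W^{1,\gamma+1}(\Omega))$, the Lipschitz truncation $w_h^\beta$ lies in $\mathcal V$, with $w_h^\beta \to \min(v^\beta, k^\beta)$ as $h \downarrow 0$, so $v^\beta - w_h^\beta \to (v^\beta - k^\beta)_+$.

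The crucial structural observation is that the extra parabolic contribution on the right-hand side of \eqref{eq:time_1} has a favourable sign. Combining
\[
\partial_t w_h^\beta = \tfrac{1}{h}(v^\beta - [v^\beta]_h)\chi_{\{[v^\beta]_h < k^\beta\}}
\]
with the strict monotonicity of $s \mapsto s^{1/\beta}$, the factors $v^\beta - [v^\beta]_h$ and $w_h - v = [v^\beta]_h^{1/\beta} - v$ carry opposite signs on $\{[v^\beta]_h < k^\beta\}$, so $\partial_t w_h^\beta \cdot (w_h - v) \leq 0$ pointwise. Inserting $\zeta \geq 0$ into \eqref{eq:time_1}, discarding this non-positive term, and passing to the limit $h \downarrow 0$ using Lemma \ref{expmolproperties} together with dominated convergence (as in the proof of Lemma \ref{lem:cont_time}) produces
\[
-\iint_{\Omega_T} \partial_t \zeta\, \b[v, \min(v,k)] \d x\d t \leq \iint_{\Omega_T} \zeta|f|(v^\beta - k^\beta)_+ \d x\d t - \iint_{\Omega_T} A(v, \nabla v^\beta) \cdot \nabla\bigl[\zeta(v^\beta - k^\beta)_+\bigr] \d x\d t.
\]

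Letting $\varepsilon \downarrow 0$, the $\partial_t \chi_\varepsilon$ piece generates the essential supremum in time $\int_\Omega \varphi^{\gamma+1}(\tau) \b[v(\tau), \min(v(\tau), k)] \d x$ at almost every $\tau$, which by Lemma \ref{estimates:boundary_terms} dominates $c^{-1} \int_\Omega (v^{(\beta+1)/2} - k^{(\beta+1)/2})_+^2 \varphi^{\gamma+1}(\tau) \d x$. The contribution of $\partial_t \varphi^{\gamma+1}$ is again estimated via Lemma \ref{estimates:boundary_terms} by $c\iint \varphi^\gamma |\partial_t \varphi| (v^{(\beta+1)/2} - k^{(\beta+1)/2})_+^2 \d x\d t$, matching the second error term in \eqref{caciocavallo}, and the source term gives $\iint |f| \varphi^{\gamma+1} (v^\beta - k^\beta)_+ \d x\d t$.

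The main obstacle is to extract a lower bound of the desired form from the diffusion term. Writing $\mathbf w := \nabla v^\beta + \beta v^{\alpha/\gamma} \nabla z$ and expanding $\nabla[\zeta(v^\beta - k^\beta)_+]$, the principal piece on $\{v > k\}$ equals
\[
A(v, \nabla v^\beta) \cdot \nabla(v^\beta - k^\beta)_+ = \beta^{-\gamma}|\mathbf w|^{\gamma+1} - \beta^{1-\gamma} v^{\alpha/\gamma}|\mathbf w|^{\gamma-1} \mathbf w \cdot \nabla z.
\]
Young's inequality with exponents $\tfrac{\gamma+1}{\gamma}$ and $\gamma+1$ absorbs the second term into a small multiple of $|\mathbf w|^{\gamma+1}$ plus the desired $v^{(\alpha/\gamma)(\gamma+1)}|\nabla z|^{\gamma+1}\chi_{\{v>k\}}$ contribution, and the elementary inequality $|\mathbf w|^{\gamma+1} \geq c|\nabla v^\beta|^{\gamma+1} - c' v^{(\alpha/\gamma)(\gamma+1)}|\nabla z|^{\gamma+1}$ (valid as $\gamma+1 > 1$) then produces the good term $c\iint |\nabla(v^\beta - k^\beta)_+|^{\gamma+1}\varphi^{\gamma+1} \d x\d t$ on the left. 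The remaining cross-piece $(\gamma+1)\varphi^\gamma (v^\beta - k^\beta)_+ A \cdot \nabla \varphi$ is controlled using $|A| \leq \beta^{-\gamma}|\mathbf w|^\gamma$ and another application of Young's inequality, yielding the $|\nabla \varphi|^{\gamma+1}(v^\beta - k^\beta)_+^{\gamma+1}$ term of \eqref{caciocavallo} plus a small multiple of $\varphi^{\gamma+1}|\mathbf w|^{\gamma+1}\chi_{\{v>k\}}$ that is absorbed into the good lower bound. Taking the essential supremum over $\tau$ concludes the argument.
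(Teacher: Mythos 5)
Your proof is correct, and for the parabolic term it takes a genuinely different route from the paper. The paper goes back to the mollified weak formulation of Lemma \ref{lem:mollified}, tests with $\phi=(v^\beta-k^\beta)_+\eta$, splits the time term into $\mathrm{I}_h+\mathrm{II}_h$, discards $\mathrm{II}_h\ge 0$ by monotonicity and computes $\mathrm{I}_h$ through the primitive $\int_0^{v_h}g(s)\,\d s$, identifying its limit with $\b[v,k]\chi_{\{v>k\}}$; you instead reuse the comparison identity of Lemma \ref{lem:cont_time_step1} with the truncated mollified map $w_h^\beta=\min([v^\beta]_h,k^\beta)$ and kill the extra term $\iint\zeta\,\partial_t w_h^\beta(w_h-v)\,\d x\d t$ by the same monotonicity sign argument, arriving at the same intermediate inequality since $\b[v,\min(v,k)]=\b[v,k]\chi_{\{v>k\}}$. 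This buys a shortcut (no chain-rule computation, no separate treatment of the initial-time term $\int_\Omega v(0)\phi_{\overline h}(0)\,\d x$) at the price of verifying $w_h^\beta\in\mathcal V$: note that the needed integrability $\partial_t w_h^\beta\in L^{\frac{\beta+1}{\beta}}(\Omega_T)$ should be deduced from $v\in L^{\beta(\gamma+1)}(\Omega_T)$ together with $\alpha+\gamma>1$, not from Lemma \ref{lem:cont_time} (which is only local in space), and the convergence $\nabla w_h^\beta\to\nabla\min(v^\beta,k^\beta)$ in $L^{\gamma+1}$ deserves a word (truncation is stable under strong $W^{1,\gamma+1}$-convergence, using $\nabla v^\beta=0$ a.e.\ on $\{v^\beta=k^\beta\}$). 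The remaining ingredients coincide with the paper's: the cutoff $\zeta=\chi_\varepsilon\varphi^{\gamma+1}$ with the Lebesgue-point argument producing the supremum term, and the diffusion estimate via Young's inequality with exponents $\frac{\gamma+1}{\gamma}$ and $\gamma+1$, absorption, and the conversion of $|\nabla v^\beta+\beta v^{\frac{\alpha}{\gamma}}\nabla z|^{\gamma+1}$ back into $|\nabla(v^\beta-k^\beta)_+|^{\gamma+1}$. As in the paper, $\zeta$ is only Lipschitz in time, so strictly speaking the same approximation remark is needed, and at the very end the two left-hand terms should be handled separately (letting $\tau\uparrow T$ in the gradient term and taking the essential supremum in the other) before adding them, rather than taking the supremum in one stroke.
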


\begin{proof}[Proof]
Let $k\geq 0$. We use the mollified weak formulation \eqref{h-averaged-form} with 
\begin{align*}
\phi=(v^\beta-k^\beta)_+\eta,
\end{align*}
where $\eta \in C^\infty_0(\Omega_T)$ is nonnegative and start investigating the parabolic part by adding and subtracting a suitable term
\begin{align*}
\iint_{\Omega_T}& \partial_t v_h \phi\d x\d t\\
& =\iint_{\Omega_T} \partial_t v_h \left((v_h)^\beta-k^\beta \right)_+\eta \d x\d t+\iint_{\Omega_T} \partial_t v_h \left[ (v^\beta-k^\beta )_+ - ((v_h)^\beta -k^\beta)_+  \right]\eta \d x\d t  \\
&=: \mathrm{I}_h+ \mathrm{II}_h,
\end{align*}
with the obvious meaning of the terms $\mathrm{I}_h$ and $\mathrm{II}_h$. We  estimate the second term with the help of Lemma \ref{expmolproperties} (iii) by
\begin{align*}
\mathrm{II}_h = \iint_{\Omega_T} \tfrac 1 h \left(v-v_h \right) \left[(v^\beta-k^\beta )_+ - ((v_h)^\beta -k^\beta)_+  \right] \eta \geq 0,
\end{align*}
where we used that the map $s\mapsto g(s):= (s^\beta-k^\beta)_+$ is a monotone increasing function.

Now, we are going to treat the first term. Since $v_h\in L^{\beta+1}(\Omega_T)$, we are able to use the chain rule as follows
\begin{align*}
\mathrm{I}_h &= \iint_{\Omega_T} \eta \partial_t v_h g(v_h) \d x \d t \\
&= \iint_{\Omega_T} \eta \partial_t \left[\int_0^{v_h} g(s) \d s \right] \d x\d t \\
&= -\iint_{\Omega_T} \partial_t \eta \int_0^{v_h} g(s) \d s\d x \d t.
\end{align*}
We are interested in  passing to the limit $h \downarrow 0$ in the last calculation. Therefore, we estimate
\begin{align*}
&\left|\iint_{\Omega_T} \partial_t \eta \int_0^{v_h} g(s) \d s\d x \d t -\iint_{\Omega_T} \partial_t \eta \int_0^{v} g(s) \d s\d x \d t \right| \\
&\hspace{6mm} \leq \iint_{\Omega_T} |\partial_t\eta| \left|\int_v^{v_h} g(s) \d s\right| \d x\d t \\
&\hspace{6mm} \leq \iint_{\Omega_T} |\partial_t \eta| \left|\int_v^{v_h} s^\beta \d s\right| \d x \d t \\
&\hspace{6mm} \leq\iint_{\Omega_T}  |\partial_t \eta| \big(v^{\beta}+v_h^\beta \big) |v-v_h|\d x\d t \to 0,
\end{align*}
as $h\downarrow 0$. This can be seen by first using H\"older's inequality  and then  Lemma \ref{expmolproperties} (i) implies that $v_h \to v$ in $L^{\beta+1}(\Omega_T)$. Noting that 
$$
\int_0^{v} g(s) \d s= \b[v,k]\chi_{\{v>k\}},
$$
and using the estimate for $\mathrm{II}_h$ in \eqref{h-averaged-form} and  passing to the limit $h \downarrow 0$, we obtain
\begin{align*}
\iint_{\Omega_T\cap\{u>k\}} &A(v,\nabla v^{\beta})\cdot \nabla\left[(v^\beta-k^\beta)_+\eta\right] -\b[v,k]\partial_t \eta \d x \d t \\
&\leq \iint_{\Omega_T} (v^\beta-k^\beta)_+ \eta f \d x \d t.
\end{align*}
We used the fact that $[A(v,\nabla v^{\beta})]_h\to A(v,\nabla v^{\beta})$ in $L^{\frac{\gamma+1}{\gamma}}(\Omega_T,\R^n)$, $f_h\to f$ in $L^{\frac{\gamma+1}{\gamma}}(\Omega_T)$ and by dominated convergence we get
$$
\int_{\Omega}v(0)\phi_{\overline h}(0)\d x= \int_\Omega\int_0^T\tfrac 1 h
e^{-\frac sh}v(0)\phi(x,s) \d s\d x \to  0,
$$
as $h\downarrow 0$.
We proceed by investigating the diffusion part of our equation
\begin{align*}
\nabla [(v^\beta-k^\beta)_+\eta]=\eta\chi_{\{v>k\}}\nabla v^\beta+(v^\beta-k^\beta)_+\nabla \eta.
\end{align*}
Using this and the definition of $A(v,\nabla v^\beta)$, we can write our last estimate as
\begin{align*}
&\iint_{\Omega_T \cap\{v>k\}} \beta^{-\gamma}|\nabla v^\beta+\beta v^\frac{\alpha}{\gamma}\nabla z|^{\gamma-1}(\nabla v^\beta +\beta v^\frac{\alpha}{\gamma}\nabla z)\cdot (\nabla v^\beta)\eta- \b[v,k]\partial_t\eta\d x\d t 
 \\
 &\quad \leq - \iint_{\Omega_T\cap\{v>k\}}\beta^{-\gamma}|\nabla v^\beta+\beta v^\frac{\alpha}{\gamma}\nabla z|^{\gamma-1}(\nabla v^\beta +\beta v^\frac{\alpha}{\gamma}\nabla z)\cdot \nabla \eta (v^\beta-k^\beta)_+ \d x\d t 
 \\
 &\qquad + \iint_{\Omega_T} (v^\beta-k^\beta)_+ \eta f \d x\d t.
\end{align*}
We add the term $\beta v^\frac{\alpha}{\gamma}\nabla z$ to the expression $(\nabla v^\beta)$ appearing in the first integral on the left-hand side. This simplifies the first integral but produces a term on the right-hand side as follows:
\begin{align*}
 &\iint_{\Omega_T\cap\{v>k\}} \beta^{-\gamma}|\nabla v^\beta+\beta v^\frac{\alpha}{\gamma}\nabla z|^{\gamma+1}\eta - \b[v,k]\partial_t\eta\d x\d t 
 \\
 &\quad\leq -\iint_{\Omega_T}\beta^{-\gamma}|\nabla v^\beta+\beta v^\frac{\alpha}{\gamma}\nabla z|^{\gamma-1}(\nabla v^\beta +\beta v^\frac{\alpha}{\gamma}\nabla z)\cdot \nabla \eta (v^\beta-k^\beta)_+ \d x\d t 
 \\
&\qquad+ \iint_{\Omega_T} \beta^{1-\gamma}|\nabla v^\beta+\beta v^\frac{\alpha}{\gamma}\nabla z|^{\gamma-1}(\nabla v^\beta +\beta v^\frac{\alpha}{\gamma}\nabla z)\cdot\nabla z\, v^\frac{\alpha}{\gamma}\eta \chi_{\{v>k\}}\d x\d t 
\\
&\qquad+ \iint_{\Omega_T} (v^\beta-k^\beta)_+ \eta f \d x\d t.
\end{align*}
We estimate the second integral on the right-hand side upwards by taking the absolute value, applying Cauchy-Schwarz inequality on the inner product and using Young's inequality with the dual exponents $\frac{\gamma+1}{\gamma}$ and $\gamma+1$. In this way we obtain one term that we can absorb to the left-hand side to conclude
\begin{align*}
 &\iint_{\Omega_T\cap \{v>k\}} c^{-1}|\nabla v^\beta+\beta v^\frac{\alpha}{\gamma}\nabla z|^{\gamma+1}\eta- \b[v,k]\partial_t\eta\d x\d t 
 \\
 &\quad\leq \iint_{\Omega_T}\beta^{-\gamma}|\nabla v^\beta+\beta v^\frac{\alpha}{\gamma}\nabla z|^\gamma |\nabla \eta|(v^\beta-k^\beta)_+ +  (v^\beta-k^\beta)_+ \eta |f| \d x\d t
 \\
 &\quad + c\iint_{\Omega_T} |\nabla z|^{\gamma+1}\, v^{\frac{\alpha}{\gamma}(\gamma+1)}\eta \chi_{\{v>k\}} \d x\d t ,
\end{align*}
for a constant $c$ that depends only on $\alpha$ and $\gamma$.
At this point we  choose $\eta=\varphi^{\gamma+1} \xi_{\tau,\varepsilon}$, where $\varphi\in C^\infty_0(\Omega_T;\R_{\geq 0})$, $\tau \in (0,T)$, $\varepsilon\in (0,T-\tau)$ and 
\begin{align*}
\xi_{\tau,\varepsilon}(t)=\begin{cases}
1, &t\in [0,\tau],
\\
1-\varepsilon^{-1}(t-\tau), &t\in (\tau,\tau+\varepsilon],
\\
0, &t\in ( \tau+\varepsilon,T].
\end{cases}
\end{align*}
Although $\eta$ is not smooth, its use can be justified by an approximation argument. With this choice we obtain
\begin{align*}
&\iint_{\Omega_T\cap\{v>k\}}  \b[v,k]\partial_t\eta\d x\d t \\
&\quad=\iint_{\Omega_T\cap\{v>k\}} \b[v,k]\xi_{\tau,\varepsilon} \partial_t\varphi^{\gamma+1}\d x\d t -\varepsilon^{-1}\int^{\tau+\varepsilon}_{\tau}\hspace{-1.5mm}\int_\Omega \chi_{\{v>k\}}\b[v,k]\varphi^{\gamma+1}\d x\d t.
\end{align*}
 Our goal is to pass to the limit $\varepsilon\downarrow 0$. Since $\b[v,k]$ is integrable, the Lebesgue differentiation theorem implies that
\begin{align*}
\lim_{\varepsilon\to 0} \varepsilon^{-1}\int^{\tau+\varepsilon}_{\tau}\hspace{-1.5mm}\int_\Omega  \chi_{\{v>k\}} \b[v,k]\varphi^{\gamma+1}\d x\d t=\int_{\Omega\cap\{v(\tau)>k\}} (\b[v,k]\varphi^{\gamma+1})(x,\tau)\d x,
\end{align*} 
for almost every $\tau \in (0,T)$. In the other integrals taking $\varepsilon\downarrow 0$ poses no problem and we see that
\begin{align*}
\iint_{\Omega_\tau} &|\nabla v^\beta+\beta v^\frac{\alpha}{\gamma}\nabla z|^{\gamma+1}\varphi^{\gamma+1}\chi_{\{v>k\}}\d x\d t + \int_{\Omega\cap \{v(\tau)>k\}} (\b[v,k]\varphi^{\gamma+1})(\cdot,\tau)\d x
 \\
 &\leq c\iint_{\Omega_T}|\nabla v^\beta+\beta v^\frac{\alpha}{\gamma}\nabla z|^\gamma \varphi^\gamma |\nabla \varphi|(v^\beta-k^\beta)_+ + \chi_{\{v>k\}} \b[v,k] |\partial_t\varphi^{\gamma+1}| \d x\d t 
 \\
&\quad+ c\iint_{\Omega_\tau} |\nabla z|^{\gamma+1}\, v^{\frac{\alpha}{\gamma}(\gamma+1)}\varphi^{\gamma+1} \chi_{\{v>k\}}+(v^\beta-k^\beta)_+ \varphi^{\gamma+1} |f| \d x\d t
\end{align*}
for a.e.\ $\tau\in (0,T)$ and a constant $c=c(\alpha,\gamma)$. The first term on the right-hand side can now be treated using Young's inequality. After this we estimate the right-hand side upwards by replacing $\Omega_\tau$ with $\Omega_T$. Finally, using Lemma \ref{estimates:boundary_terms} to estimate the terms involving the function $\b[v,k]$ we obtain
\begin{align*}
 \iint_{\Omega_\tau} &|\nabla v^\beta+\beta v^\frac{\alpha}{\gamma}\nabla z|^{\gamma+1}\varphi^{\gamma+1}\chi_{\{v>k\}}\d x\d t + \int_{\Omega} \big(\big(v^{\frac{\beta+1}{2}}-k^{\frac{\beta+1}{2}} \big)^2_+\varphi^{\gamma+1}\big)(\tau)\d x
 \\
 &\leq c\iint_{\Omega_T} |\nabla \varphi|^{\gamma+1} (v^\beta-k^\beta)_+^{\gamma+1} +\big(v^{\frac{\beta+1}{2}}-k^{\frac{\beta+1}{2}} \big)^2_+\varphi^\gamma |\partial_t \varphi|\d x\d t 
 \\
&\quad+ c\iint_{\Omega_T} |\nabla z|^{\gamma+1}\, v^{\frac{\alpha}{\gamma}(\gamma+1)}\varphi^{\gamma+1} \chi_{\{v>k\}} + (v^\beta-k^\beta)_+ \varphi^{\gamma+1} |f| \d x\d t,
\end{align*}
for $c=c(\gamma,\alpha)$.
Analyzing each term on the left-hand side separately, we can let $\tau \uparrow T$ in the first term and take the essential supremum in the second one. This leads to 
\begin{align}\label{almost_there}
\iint_{\Omega_T} &|\nabla v^\beta+\beta v^\frac{\alpha}{\gamma}\nabla z|^{\gamma+1}\varphi^{\gamma+1}\chi_{\{v>k\}}\d x\d t \\
\notag&\quad+ \esssup_{\tau\in (0,T)}\int_\Omega \big(\big(v^{\frac{\beta+1}{2}}-k^{\frac{\beta+1}{2}} \big)^2_+\varphi^{\gamma+1}\big)(\tau)\d x
 \\
\notag &\leq c\iint_{\Omega_T} |\nabla \varphi|^{\gamma+1} (v^\beta-k^\beta)^{\gamma+1}_+ +\big(v^{\frac{\beta+1}{2}}-k^{\frac{\beta+1}{2}} \big)^2_+ \varphi^\gamma|\partial_t \varphi| \d x\d t
 \\
\notag &\quad+  c\iint_{\Omega_T} |\nabla z|^{\gamma+1}\, v^{\frac{\alpha}{\gamma}(\gamma+1)}\varphi^{\gamma+1} \chi_{\{v>k\}} + (v^\beta-k^\beta)_+ \varphi^{\gamma+1} |f| \d x\d t,
\end{align}
for a constant $c$ depending only on $\alpha$ and $\gamma$. In order to modify the first term on the left-hand side we note that
\begin{align*}
|\nabla (v^\beta-k^\beta)_+|^{\gamma+1}&=\chi_{\{v>k\}}|\nabla v^\beta|^{\gamma+1}= \chi_{\{v>k\}}|\nabla v^\beta+\beta v^\frac{\alpha}{\gamma}\nabla z-\beta v^\frac{\alpha}{\gamma}\nabla z|^{\gamma+1}
\\ &\leq 2^\gamma \chi_{\{v>k\}} |\nabla v^\beta+\beta v^\frac{\alpha}{\gamma}\nabla z|^{\gamma+1} + 2^\gamma\beta^{\gamma+1} |\nabla z|^{\gamma+1}\, v^{\frac{\alpha}{\gamma}(\gamma+1)}\chi_{\{v>k\}}.
\end{align*}
Looking at this estimate, we see that the first term on the left-hand side of \eqref{almost_there} can be replaced by
\begin{align*}
\iint_{\Omega_T} |\nabla (v^\beta-k^\beta)_+|^{\gamma+1}\varphi^{\gamma+1}\d x\d t.
\end{align*}
This finishes the proof of the Lemma.
\end{proof}

As an immediate consequence we have:

\begin{cor}
\label{cor:energy_est}
Let $v$ be a weak solution to \eqref{reformulated} in the sense of Definition \ref{weakdef}. Then for every $Q_{R,\theta}(z_o)\Subset \Omega_T$, $0<s<\theta$, $0<\rho<R$ and $k\geq 0$ we have
\begin{align}\label{energy_est_cylinder}
\iint_{Q_{\rho,s}(z_o)}& |\nabla (v^\beta-k^\beta)_+|^{\gamma+1}\d x\d t +\esssup_{\tau\in [t_o-s,t_o+s]}\int_{B_\rho(x_o)}\big(v^\frac{\beta+1}{2}(\tau)-k^\frac{\beta+1}{2}\big)^2_+\d x \notag
\\
\notag &\leq c\iint_{Q_{R,\theta}(z_o)} \frac{ (v^\beta-k^\beta)^{\gamma+1}_+}{(R-\rho)^{\gamma+1}} +\frac{\big(v^{\frac{\beta+1}{2}}-k^{\frac{\beta+1}{2}} \big)^2_+}{\theta-s}  \d x\d t
 \\
 &\quad+  c\iint_{Q_{R,\theta\cap\{v>k\}}(z_o)}  v^{\beta(\gamma+1)} + (v^\beta-k^\beta)_+^{\gamma+1}  +|f|^{\frac{\gamma+1}{\gamma}} +|\nabla z|^{\beta(\gamma+1)}\d x\d t 
\end{align}
for a constant $c$ that depends only on $\alpha$ and $\gamma$.
\end{cor}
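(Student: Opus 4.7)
The plan is to deduce the corollary from the Caccioppoli estimate \eqref{caciocavallo} by inserting a standard product cut-off function tailored to the pair of nested cylinders $Q_{\rho,s}(z_o) \Subset Q_{R,\theta}(z_o)$, and then absorbing the two structural terms on the right-hand side of \eqref{caciocavallo} into the four terms appearing in \eqref{energy_est_cylinder} via Young's inequality.

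First, I would choose $\varphi(x,t) = \varphi_1(x)\varphi_2(t)$, where $\varphi_1 \in C^\infty_0(B_R(x_o);[0,1])$ satisfies $\varphi_1 \equiv 1$ on $B_\rho(x_o)$ with $|\nabla \varphi_1| \leq C/(R-\rho)$, and $\varphi_2 \in C^\infty_0((t_o-\theta,t_o+\theta);[0,1])$ satisfies $\varphi_2 \equiv 1$ on $[t_o-s, t_o+s]$ with $|\partial_t \varphi_2| \leq C/(\theta-s)$. Plugging this $\varphi$ into \eqref{caciocavallo}, the gradient and essential-supremum terms on the left become, after dropping the factor $\varphi^{\gamma+1} \equiv 1$ on the inner cylinder, the two terms written on the left of \eqref{energy_est_cylinder}. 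On the right-hand side, the first two integrands are controlled directly by $(R-\rho)^{-(\gamma+1)}$ and $(\theta-s)^{-1}$ times the indicated quantities on $Q_{R,\theta}(z_o)$, since $\varphi \leq 1$ everywhere.

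For the remaining two terms, I would use Young's inequality. For the source term, write
\[
(v^\beta-k^\beta)_+\varphi^{\gamma+1}|f| \leq \tfrac{1}{\gamma+1}(v^\beta-k^\beta)_+^{\gamma+1} + \tfrac{\gamma}{\gamma+1}|f|^{\frac{\gamma+1}{\gamma}}\chi_{\{v>k\}},
\]
where the indicator appears for free since $(v^\beta-k^\beta)_+$ is supported in $\{v>k\}$, which yields precisely the $(v^\beta-k^\beta)_+^{\gamma+1}$ and $|f|^{\frac{\gamma+1}{\gamma}}$ contributions in \eqref{energy_est_cylinder}. For the topography term, I would apply Young's inequality with the dual exponents $\beta$ and $\beta/(\beta-1)$ to get
\[
|\nabla z|^{\gamma+1} v^{\frac{\alpha}{\gamma}(\gamma+1)}\chi_{\{v>k\}} \leq \tfrac{1}{\beta}|\nabla z|^{\beta(\gamma+1)}\chi_{\{v>k\}} + \tfrac{\beta-1}{\beta} v^{\frac{\alpha(\gamma+1)}{\gamma}\cdot\frac{\beta}{\beta-1}}\chi_{\{v>k\}}.
\]
The bookkeeping step I expect to be the only real point of care is verifying that the exponent on $v$ matches: using $\beta=(\alpha+\gamma)/\gamma$, one has $\beta-1=\alpha/\gamma$, hence $\frac{\alpha(\gamma+1)}{\gamma}\cdot\frac{\beta}{\beta-1}=\frac{(\gamma+1)(\alpha+\gamma)}{\gamma}=\beta(\gamma+1)$, which is exactly the exponent that appears in \eqref{energy_est_cylinder}. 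Combining all these estimates and collecting constants depending only on $\alpha$ and $\gamma$ finishes the proof.
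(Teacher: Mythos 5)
Your proposal is correct and follows essentially the same route as the paper: insert a cut-off function equal to $1$ on $Q_{\rho,s}(z_o)$, supported in $Q_{R,\theta}(z_o)$, with $|\nabla\varphi|\leq c/(R-\rho)$ and $|\partial_t\varphi|\leq c/(\theta-s)$, into the Caccioppoli estimate \eqref{caciocavallo}, and absorb the source and topography terms via Young's inequality; your exponent check $\frac{\alpha(\gamma+1)}{\gamma}\cdot\frac{\beta}{\beta-1}=\beta(\gamma+1)$ is exactly the bookkeeping the paper leaves implicit.
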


\begin{proof}[Proof]
Taking $\varphi\in C^\infty_0(Q_{R,\theta}(z_0);[0,1])$ with $\varphi\equiv 1$ on $Q_{r,s}(z_o)$ and 
$$
|\nabla\varphi| \leq \frac{2}{R-\rho}  \quad \text{ and } \quad |\partial_t \varphi|\leq \frac{2}{\theta-s}
$$
as a testing function in \eqref{caciocavallo} and using Young's inequality  implies the claimed estimate. 
\end{proof}

\section{Local boundedness of weak solutions}\label{degiorgisect}

In this section we show that weak solutions the DSW equation \eqref{DSWeq} are locally bounded. 
This will be done by iterating an inequality, which we obtain by using the Caccioppoli estimate from Corollary \ref{cor:energy_est} together with the Sobolev embedding. 

\vspace{3mm}

\begin{proof}[Proof of Theorem \ref{mainthm}]
Set 
$$
m:=\frac{\beta+1}{\beta}=1+\frac 1 \beta=1+\frac{\gamma}{\alpha+\gamma} <1+\gamma
$$
and consider $z_o \in \Omega_T$ and $0<\rho\leq 1$ so small that $Q_{\rho,\rho^m}(z_o) \Subset \Omega_T$. For $j\in \N_0$ we define sequences of radii $\rho_j,\hat \rho_j$ by
$$
\rho_j=\tfrac 12 (1+2^{-j})\rho  \quad \text{ and } \quad \hat \rho_j =\tfrac 12 (\rho_j+\rho_{j+1})
$$
as well as times $\tau_j,\hat \tau_j$ by
$$
\tau_j = \left(\frac{\rho}{2}\right)^m \left(1+\frac{2^m-1}{2^{mj}} \right) \quad \text{ and }\quad \hat \tau_j = \tfrac 12 (\tau_j+\tau_{j+1})
$$ 
and also levels  $k_j$ by
$$
k_j =k \left(1-2^{-j}\right)^{\frac{2}{\beta+1}}
$$
where $k\geq 1$ is a parameter to be chosen later. Moreover, we introduce the cylinders 
$$
Q_j:=Q_{\rho_j,\tau_j}(z_o)  \quad \text{ and } \quad \widehat Q_j :=Q_{\hat\rho_j,\hat\tau_j}(z_o) 
$$
 In the following we consider the sequence of integrals 
\begin{align}
\label{def:sequence}
Y_j := \iint_{Q_j} \Big(v^{\frac{\beta+1}{2}}-k_j^{\frac{\beta+1}{2}} \Big)^{\frac{2\beta(\gamma+1)}{\beta+1}}_+ \d x\d t ,
\end{align}
which are finite since $v\in L^{\beta(\gamma+1)}(\Omega_T)$.
The idea of the proof is to exploit Lemma \ref{fastconvg}. The first step is to show a suitable recursive estimate for $Y_j$. It turns out that our free parameter $k$ can be chosen large enough so that the quantity $Y_0$ is small enough and we obtain $Y_j\to 0$ as $j\to \infty$. This implies 
$$
\iint_{Q_{\frac \rho 2, (\frac{\rho}{2})^m}(z_o)} \big(v^{\frac{\beta+1}{2}}-k^{\frac{\beta+1}{2}} \big)^2_+ \d x\d t =0,
$$
and we obtain $\esssup_{Q_{\frac \rho 2, (\frac{\rho}{2})^m}(z_o)}v \leq k$. Another outcome of the proof is the fact that we can determine $k$ in terms of the $L^{\beta(\gamma+1)}$-norm of $v$.
 
By these observations it remains to show a suitable iterative estimate for $Y_j$. Therefore, we consider a sequence of cut-off functions $\zeta_j\in C^\infty_0(B_{\hat \rho_j}(x_o);[0,1])$ with $\zeta_j \equiv 1$ in $B_{\rho_{j+1}}(x_o)$ and $|\nabla\zeta_j| \leq \frac{16\cdot 2^j}{\rho}$. With this particular choice we obtain
\begin{align}
\label{iteration_start}
Y_{j+1} &\leq \iint_{\widehat Q_j} \Big(\big(v^{\frac{\beta+1}{2}}-k_{j+1}^{\frac{\beta+1}{2}} \big)^{\frac{2\beta}{\beta+1}}_+ \zeta_j \Big)^{\gamma+1} \d x\d t
\\
\notag&\leq \left[\iint_{\widehat Q_j} \Big(\big(v^{\frac{\beta+1}{2}}-k_{j+1}^{\frac{\beta+1}{2}} \big)^{\frac{2\beta}{\beta+1}}_+ \zeta_j \Big)^{(\gamma+1)\frac{n+m}{n}} \d x \d t \right]^{\frac{n}{n+m}}\\
\notag& \qquad\cdot \left|\widehat Q_j\cap\{v>k_{j+1}\}\right|^{1-\frac{n}{n+m}}.
\end{align}
To simplify the notation, we write the function appearing in the first factor as 
$$
\phi:= \big(v^{\frac{\beta+1}{2}}-k_{j+1}^{\frac{\beta+1}{2}} \big)^{\frac{2\beta}{\beta+1}}_+ \zeta_j \leq (v^\beta-k_{j+1}^\beta)_+ \zeta_j,
$$
where we are able to estimate $v$ in this way since $\beta>1$. We proceed by estimating the first term in \eqref{iteration_start} and note that $\gamma+1 <2\leq n$. This allows us to first use H\"older's inequality and then Sobolev's inequality slice-wise to conclude
\begin{align*}
&\iint_{\widehat Q_j} \phi^{(\gamma+1)\frac{n+m}{n}} \d x \d t =\int_{t_o-\hat\tau_j}^{t_o+\hat \tau_j}\int_{B_{\hat \rho_j}(x_o)} \phi^{\gamma+1}\phi^{(\gamma+1)\frac{m}{n}} \d x\d t
\\
&\quad \leq \int_{t_o-\hat\tau_j}^{t_o+\hat \tau_j} \left[\int_{B_{\hat \rho_j}(x_o)}\big((v^\beta-k_{j+1}^\beta)_+\zeta_j \big)^{\frac{n(\gamma+1)}{n-(\gamma+1)}} \d x \right]^{\frac{n-(\gamma+1)}{n}} 
\left[\int_{B_{\hat \rho_j}(x_o)} \phi^{m} \d x\right]^{\frac{\gamma+1}{n}} \d t 
\\
&\quad \leq c\left[\esssup_{t\in(t_o-\hat \tau_j,t_o+\hat \tau_j)}\int_{B_{\hat \rho_j}(x_o)} \phi^{m} \d x \right]^{\frac{\gamma+1}{n}} \int_{t_o-\hat\tau_j}^{t_o+\hat \tau_j} \int_{B_{\hat \rho_j}(x_o)}|\nabla \big( (v^\beta-k_{j+1}^\beta)_+\zeta_j\big) |^{\gamma+1} \d x\d t 
\\
&\quad \leq c\left[\esssup_{t\in(t_o-\hat \tau_j,t_o+\hat \tau_j)}\int_{B_{\hat \rho_j}(x_o)} \big(v^{\frac{\beta+1}{2}}-k_{j+1}^{\frac{\beta+1}{2}} \big)_+^{2} \d x \right]^{\frac{\gamma+1}{n}} 
\\
&\hspace{12mm} \cdot \iint_{\widehat Q_j} |\nabla (v^\beta-k_{j+1}^\beta)_+|^{\gamma+1}\zeta^{\gamma+1}_j+ (v^\beta-k_{j+1}^\beta)_+^{\gamma+1}|\nabla \zeta_j|^{\gamma+1} \d x \d t 
\\
&\quad \leq c\left[\esssup_{t\in(t_o-\hat \tau_j,t_o+\hat \tau_j)}\int_{B_{\hat \rho_j}(x_o)} \big(v^{\frac{\beta+1}{2}}-k_{j+1}^{\frac{\beta+1}{2}} \big)_+^{2} \d x \right]^{\frac{\gamma+1}{n}} \\
&\hspace{12mm} \cdot \iint_{\widehat Q_j} |\nabla (v^\beta-k^\beta_{j+1})_+|^{\gamma+1} + 2^{j(\gamma+1)}\frac{(v^\beta-k_{j+1}^\beta)_+^{\gamma+1}}{\rho^{\gamma+1}} \d x \d t,
\end{align*}
where $c$ depends only on $\alpha$, $\gamma$ and $n$.
Next, we are going to estimate the right-hand side of the previous inequality with the help of the Caccioppoli inequality \eqref{energy_est_cylinder} 
\begin{align*}
\esssup_{t\in(t_o-\hat \tau_j,t_o+\hat \tau_j)} &\int_{B_{\hat \rho_j}(x_o)} \big(v^{\frac{\beta+1}{2}}-k_{j+1}^{\frac{\beta+1}{2}} \big)_+^{2} \d x + \iint_{\widehat Q_j} |\nabla (v^\beta-k^\beta_{j+1})_+|^{\gamma+1}  \d x\d t 
\\
& \leq c \iint_{Q_j}\frac{(v^\beta-k_{j+1}^\beta)^{\gamma+1}_+}{(\rho_j-\hat\rho_j)^{\gamma+1}} +\frac{\big(v^{\frac{\beta+1}{2}}-k_{j+1}^{\frac{\beta+1}{2}} \big)_+^2}{\tau_j-\hat\tau_j} \d x\d t
\\
&\quad +c\iint_{Q_j\cap\{v>k_{j+1}\}}  v^{\beta(\gamma+1)} +|f|^{\frac{\gamma+1}{\gamma}}+ (v^\beta-k_{j+1}^\beta)^{\gamma+1}_+ + |\nabla z|^{\beta(\gamma+1)} \d x\d t 
\\
&\leq c \iint_{Q_j} 2^{j(\gamma+1)}\frac{(v^\beta-k_{j+1}^\beta)^{\gamma+1}_+}{\rho^{\gamma+1}} + 2^{mj}\frac{\big(v^{\frac{\beta+1}{2}}-k_{j+1}^{\frac{\beta+1}{2}} \big)_+^2}{\rho^m} \d x\d t
\\
&\quad +c\iint_{Q_j\cap\{v>k_{j+1}\}} v^{\beta(\gamma+1)} + (v^\beta-k_{j+1}^\beta)^{\gamma+1} + |f|^{\frac{\gamma+1}{\gamma}}+|\nabla z|^{\beta(\gamma+1)} \d x\d t
\\
&\leq c \iint_{Q_j\cap\{v>k_{j+1}\}} 2^{j(\gamma+1)}\left[ \frac{\big(v^{\frac{\beta+1}{2}}-k_{j+1}^{\frac{\beta+1}{2}} \big)_+^{2\frac{\beta}{\beta+1}(\gamma+1)}}{\rho^{\gamma+1}}+\frac{k_{j+1}^{\beta(\gamma+1)}}{\rho^{\gamma+1}}\right] \d x\d t 
\\
&\quad +c\iint_{Q_j\cap\{v>k_{j+1}\}} |f|^{\frac{\gamma+1}{\gamma}}+|\nabla z|^{\beta(\gamma+1)}+1 \d x\d t
\end{align*}
for a constant $c$ depending only on $\alpha$, $\gamma$ and $n$.
Note that in the last line we used Young's inequality, the fact that $0<\rho\leq 1$ and the estimate 
$$
(v^\beta-k_{j+1}^\beta)^{\gamma+1}\leq v^{\beta(\gamma+1)}\leq c(\beta,\gamma) \left[\big(v^{\frac{\beta+1}{2}}-k_{j+1}^{\frac{\beta+1}{2}}\big)^{\frac{2\beta(\gamma+1)}{\beta+1}}+k_{j+1}^{\beta(\gamma+1)}\right].
$$
Using the abbreviation $G:=|f|^{\frac{\gamma+1}{\gamma}}+|\nabla z|^{\beta(\gamma+1)}+1$ and combining the last two estimates shows
\begin{align*}
&\iint_{\widehat Q_j} \phi^{(\gamma+1)\frac{n+m}{n}} \d x \d t \\
&\quad\leq  c \left[\iint_{Q_j\cap\{v>k_{j+1}\}} 2^{j(\gamma+1)}\left[ \frac{\big(v^{\frac{\beta+1}{2}}-k_{j+1}^{\frac{\beta+1}{2}} \big)_+^{2\frac{\beta}{\beta+1}(\gamma+1)}}{\rho^{\gamma+1}}+\frac{k_{j+1}^{\beta(\gamma+1)}}{\rho^{\gamma+1}}\right]+G \d x\d t  \right]^{1+\frac{\gamma+1}{n}}.
\end{align*}
Let us notice that
\begin{align*}
\big| Q_j&\cap\{v>k_{j+1}\}\big| \big( k_{j+1}^{\frac{\beta+1}{2}}-k_j^{\frac{\beta+1}{2}}\big)^{2\beta\frac{\gamma+1}{\beta+1}} \\
&\leq \iint_{ Q_j\cap\{v>k_{j+1}\}} \big(v^{\frac{\beta+1}{2}}-k_j^{\frac{\beta+1}{2}}\big)^{2\beta\frac{\gamma+1}{\beta+1}}_+ \d x\d t \leq Y_j
\end{align*}
and since $k\geq 1$ this implies
$$
\big| Q_j\cap\{v>k_{j+1}\}\big| \leq \frac{2^{(j+1)\frac{2\beta(\gamma+1)}{\beta+1}}}{k^{\beta(\gamma+1)}}Y_j \leq 2^{(j+1)\frac{2\beta(\gamma+1)}{\beta+1}} Y_j.
$$ 
This finally allows us to estimate the function $\phi^{(\gamma+1)\frac{n+m}{n}}$ by
\begin{align*}
\iint_{\widehat Q_j}& \phi^{(\gamma+1)\frac{n+m}{n}} \d x \d t \\
&\leq c\left[\frac{2^{j\frac{2\beta(\gamma+1)}{\beta+1}}}{\rho^{\gamma+1}}Y_j+\|G\|_{L^\sigma(\Omega_T)} \big| Q_j\cap\{v>k_{j+1}\}\big|^{1-\frac1 \sigma}\right]^{1+\frac{\gamma+1}{n}} \\
&\leq c \left[\frac{2^{j\frac{2\beta(\gamma+1)}{\beta+1}}}{\rho^{\gamma+1}}\left(1+\|v\|_{L^{\beta(\gamma+1)}(Q_\rho(z_o))}\right)^{\frac{\beta(\gamma+1)}{\sigma}}Y_j^{1 -\frac 1 \sigma} \right]^{1+\frac{\gamma+1}{n}},
\end{align*}
for a constant $c=c(\alpha,\gamma,n,\|G\|_{L^\sigma}(\Omega_T))$ and where we also used that $Y_j \leq \|v\|^{\beta(\gamma+1)}_{L^{\beta(\gamma+1)}(Q_{\rho}(z_o))}$.
Utilizing \eqref{iteration_start} and the last estimate shows
\begin{align*}
Y_{j+1} &\leq c\left[\frac{2^{j\frac{2\beta(\gamma+1)}{\beta+1}}}{\rho^{\gamma+1}} \left(1+\|v\|_{L^{\beta(\gamma+1)}(Q_\rho(z_o))}\right)^{\frac{\beta(\gamma+1)}{\sigma}}Y_j^{1 -\frac 1 \sigma} \right]^{\frac{n+\gamma+1}{n+m}}
\cdot \left[ \frac{2^{j\frac{2\beta(\gamma+1)}{\beta+1}}}{k^{\beta(\gamma+1)}} Y_j\right]^{\frac{m}{n+m}} \\                             
& \leq c \left(1+\|v\|_{L^{\beta(\gamma+1)}(Q_\rho(z_o))}\right)^{\frac{\beta(\gamma+1)(n+\gamma+1)}{(n+m)\sigma}} \frac{2^{j\frac{2\beta(\gamma+1)}{\beta+1}\frac{m+n+\gamma+1}{m+n}}}{\rho^{(\gamma+1)\frac{n+\gamma+1}{n+m}}} \frac{Y_j^{1+\frac{1}{n+m}(\gamma+1-\frac 1 \sigma(n+\gamma+1))}}{k^{\beta(\gamma+1)\frac{m}{n+m}}} \\
&= Cb^jY_j^{1+\delta}
\end{align*}
where we used the notation
\begin{align*}
&C=\frac{c}{k^{\beta(\gamma+1)\frac{m}{n+m}}\rho^{(\gamma+1)\frac{n+\gamma+1}{n+m}}}\left(1+\|v\|_{L^{\beta(\gamma+1)}(Q_\rho(z_o))}\right)^{\frac{\beta(\gamma+1)(n+\gamma+1)}{(n+m)\sigma}} ,\\
& b=2^{\frac{2\beta(\gamma+1)}{\beta+1}\frac{m+n+\gamma+1}{m+n}}, \\
& \delta=\tfrac{1}{n+m}\big(\gamma+1-\tfrac 1 \sigma(n+\gamma+1)\big)
\end{align*}
for some constant $c$ depending only on $n$, $\alpha$, $\gamma$ and $\|G\|_{L^\sigma(\Omega_T)}$. Note that $\delta>0$ holds true by the assumption $\sigma>\frac{n+\gamma+1}{\gamma+1}$. In order to apply Lemma \ref{fastconvg} we have to ensure that 
$$
Y_0 =\iint_{Q_{\rho,\rho^m}(z_o)} v^{\beta(\gamma+1)} \d x \d t\leq C^{-\frac 1 \delta}b^{-\frac{1}{\delta^2}}
$$
is satisfied. This can be realized by choosing $k\geq 1$ large enough. For instance, if we take
$$
k= c\rho^{-\frac{n+\gamma+1}{\beta m}}\left[1+\iint_{Q_{\rho,\rho^m}(z_o)} v^{\beta(\gamma+1)} \d x\d t\right]^{\frac{1}{\beta m}}
$$ 
with a suitable constant $c$ depending only on $\alpha$, $\gamma$, $n$, $\|G\|_{L^\sigma(\Omega_T)}$ and $\sigma$, all requirements of Lemma \ref{fastconvg} are satisfied and $Y_j\to 0$ as $j\to \infty$ which implies
$$
\sup_{Q_{\frac \rho 2, (\frac \rho 2)^m}(z_o)} v \leq k.
$$
This finishes the proof of Theorem \ref{mainthm}.
\end{proof}

%

\end{document}